\numberwithin{equation}{section}
\numberwithin{figure}{section}
\theoremstyle{plain}
\newtheorem{thm}{\protect\theoremname}[section]
\theoremstyle{remark}
\newtheorem{rem}[thm]{\protect\remarkname}
\theoremstyle{plain}
\newtheorem{prop}[thm]{\protect\propositionname}
\newtheorem{cor}[thm]{\protect\corollaryname}
\newtheorem{assumption}[thm]{\protect\assumptionname}
\providecommand{\assumptionname}{Assumption}
\providecommand{\corollaryname}{Corollary}
\providecommand{\propositionname}{Proposition}
\providecommand{\remarkname}{Remark}
\providecommand{\theoremname}{Theorem}
\begin{document}
\subjclass[2020]{Primary: 42C15; secondary: 47A63, 47B65}
\title[Random Frames from WR Flows]{Random Frame Decompositions from Weighted Residual Flows}
\begin{abstract}
We study the evolution of a positive operator under weighted residual
maps determined by a finite family of orthogonal projections. Iterating
these maps along the rooted tree of multi-indices produces a “weighted
residual energy tree”, together with natural path measures obtained
by normalizing the dissipated energy or trace at each step. Under
a quantitative coverage condition on the projections, we show that
along almost every branch the residuals converge strongly to zero
and the dissipated pieces admit a rank-one decomposition that reconstructs
the initial operator. In the special case where the initial operator
is the identity on a subspace, this yields almost surely a random
Parseval frame generated intrinsically by the weighted residual dynamics.
\end{abstract}

\author{James Tian}
\address{Mathematical Reviews, 535 W. William St, Suite 210, Ann Arbor, MI
48103, USA}
\email{james.ftian@gmail.com}
\keywords{weighted residual maps; positive operators; random frames; path measures;
branching processes}

\maketitle
\tableofcontents{}

\section{Introduction}\label{sec:1}

Let $H$ be a Hilbert space and let $\left\{ P_{1},\dots,P_{m}\right\} $
be a fixed finite family of orthogonal projections on $H$. Given
a positive operator $R\in B\left(H\right)_{+}$, we consider the basic
update 
\[
\Phi_{P}\left(R\right):=R^{1/2}\left(I-P\right)R^{1/2},
\]
which removes the $P$-component of $R$ and transports the remainder
back into the cone $B\left(H\right)_{+}$. Iterating these maps along
the rooted $m$-ary tree of multi-indices produces a tree-indexed
family of residual operators and a corresponding family of dissipated
pieces. The resulting object, which we call the \emph{weighted residual
(WR) energy tree}, packages a collection of Loewner-monotone identities
that express $R$ as a telescoping sum of positive contributions over
finite levels of the tree. The general background on positive operators
and Loewner order used throughout is standard \cite{MR2284176,MR493419}.

The point of view of this paper is that the WR energy tree is not
merely an algorithmic bookkeeping device, but also a canonical sample
space for an intrinsic stochastic process. At each node $w$ of the
tree, the update from $R_{w}$ to its children $R_{wj}$ is accompanied
by a dissipated operator 
\[
D_{w,j}:=R^{1/2}_{w}P_{j}R^{1/2}_{w}\ge0,
\]
and these dissipated quantities may be normalized to produce transition
probabilities on the set of children. In this way the WR dynamics
canonically generate probability measures on the boundary $\Omega=\left\{ 1,\dots,m\right\} ^{\mathbb{N}}$,
with no extrinsic randomness imposed. We work with two complementary
normalizations, and the probabilistic tools we use are standard martingale
and branching process arguments \cite{MR1155402,MR3930614,MR163361,MR3616205}.
\begin{enumerate}
\item \emph{Energy-biased branching.} Fix $x\in H$ with $\left\langle x,R_{0}x\right\rangle >0$.
By normalizing the scalar weights $\left\langle x,D_{w,j}x\right\rangle $
at each step, we obtain a consistent family of cylinder probabilities
and hence a path measure $\nu_{x}$ on $\Omega$. This measure reflects
the manner in which the quadratic form $\left\langle x,Rx\right\rangle $
is dissipated down the tree.
\item \emph{Trace-biased branching.} When $R_{0}$ is trace class, we may
instead normalize the weights $\mathrm{tr}\left(D_{w,j}\right)$ to
obtain a measure $\ensuremath{\nu_{\mathrm{tr}}}$ on $\Omega$. This
is the natural choice for reconstruction statements at the level of
operators, rather than a fixed vector state.
\end{enumerate}
\noindent The first main theorem is an extinction principle under
a quantitative nondegeneracy hypothesis on the projections. Informally,
we assume that the family $\left\{ P_{j}\right\} $ captures a definite
fraction of the energy of every residual vector that appears along
the WR orbit. In the paper this is stated as a uniform leakage condition
\eqref{eq:5-1}, and it implies that along $\nu_{x}$-almost every
branch the residual energy tends to zero: 
\[
\left\langle x,R_{\infty}\left(\omega\right)x\right\rangle =0\qquad\text{for \ensuremath{\nu_{x}}-almost every \ensuremath{\omega\in\Omega}.}
\]
In particular, the pathwise telescoping identity becomes an exact
energy decomposition for $\left\langle x,R_{0}x\right\rangle $ along
typical branches. We prove this using elementary martingale and branching
process arguments adapted to the intrinsic weights of the WR tree.

The second main theorem is an operator level reconstruction from rank-one
atoms. Assuming $R_{0}$ is trace class, each dissipated operator
along a branch is compact and admits a spectral decomposition into
rank-one pieces. By arranging these pieces into a countable collection
of vectors $\left\{ \varphi_{k,r}\left(\omega\right)\right\} $ on
the branch, we obtain a pathwise Parseval-type identity 
\[
\sum_{k,r}\left|\left\langle x,\varphi_{k,r}\left(\omega\right)\right\rangle \right|^{2}=\left\langle x,R_{0}x\right\rangle ,
\]
valid for every $x\in H$ and for $\ensuremath{\nu_{\mathrm{tr}}}$-almost
every $\omega$. Equivalently, the corresponding random frame operator
converges in the strong operator topology to $R_{0}$. In the special
case that $R_{0}$ is the identity on a closed subspace $K\subseteq H$
(and vanishes on $K^{\perp}$), the vectors $\left\{ \varphi_{k,r}\left(\omega\right)\right\} $
form a Parseval frame for $K$ for $\nu_{\mathrm{tr}}$-almost every
branch. Thus, starting from a single positive operator and a finite
family of projections, the WR dynamics generate an intrinsically random
Parseval frame whose frame operator is exactly the initial datum.
This places the work in the orbit of classical frame theory and its
operator-theoretic formulations \cite{MR1686653,MR1968126,MR2066823,MR2224902,MR4832149,MR3005286,MR4953099,MR4280112}.

\subsection*{Relation to existing literature}

From the perspective of frame theory, this work lies at the intersection
of (i) classical frames and their operator-theoretic formulations
\cite{MR1686653,MR1968126,MR2224902}, (ii) fusion frames and frames
of subspaces motivated by distributed reconstruction \cite{MR2066823},
and (iii) dynamical and algorithmic viewpoints in which the frame
coefficients are generated from iterates of an underlying operator
\cite{MR3027915,MR3613395,MR4093918,MR3785475,MR3650440,MR3582259,MR4990268}.
A common feature of these directions is that the frame object is specified
\emph{a priori} (as a family of vectors, subspaces, or a sampling
distribution), and one studies stability, optimality, or reconstruction
properties. The present paper differs in that the frame is \emph{generated
dynamically} from the Loewner-monotone dissipation of a single operator
under repeated residual updates.

On the operator side, $\Phi_{P}\left(R\right)=R^{1/2}\left(I-P\right)R^{1/2}$
belongs to the same formal neighborhood as shorted operators, Schur
complements, and related residual constructions in matrix analysis
and network theory \cite{MR287970,MR356949,MR938493,MR852902,MR2234254,MR2345997,MR3983070,MR242573}.
Those theories are typically static (one shorts once relative to a
single subspace, or takes a single Schur complement of a block operator),
whereas the WR energy tree iterates a residual step along a fixed
flag and propagates the result down an infinite branching structure.
What's new here is the combination of Loewner-order telescoping with
intrinsic boundary measures, producing almost-sure extinction and,
in the trace-class setting, almost-sure Parseval reconstruction. Classical
geometric and analytic structure of pairs of subspaces and projections
provides a useful conceptual reference point \cite{MR34514,MR251519,MR2580440},
as does the general theory of operator ranges and range inclusions
\cite{MR203464,MR293441,MR531986,MR866966}. (We emphasize that we
do not require any commuting or two-projection normal form assumptions.)

Finally, the appearance of probability measures supported on the boundary
of a rooted tree places the construction close in spirit to the general
theory of stochastic processes on trees. The measures $\nu_{x}$ and
$\nu_{\mathrm{tr}}$ arise canonically from the WR weights, and the
proofs use Kolmogorov consistency together with martingale techniques
adapted to this operator-generated branching.

The WR update is also naturally adjacent to projection algorithms
and randomized iterative schemes, particularly those connected to
Kaczmarz-type methods and cyclic/random products of projections \cite{MR2311862,MR2500924,MR3134343,MR3439812,MR4235310,MR3432148,MR3773065,MR2903120,MR3145756,MR4310540,MR4997577}.
We do not consider algorithmic complexity questions here, but the
structural parallel helps motivate the role of normalized dissipation
as a canonical sampling rule.

\subsection*{Organization}

\prettyref{sec:2} sets up the WR energy tree and records the basic
telescoping identities over finite levels. \prettyref{sec:3} constructs
the boundary path measures from energy normalized dissipation and
proves Kolmogorov consistency, including the treatment of finite-time
absorption. \prettyref{sec:4} develops the random branch bookkeeping
identities that relate residuals to cumulative dissipation. \prettyref{sec:5}
proves uniform extinction under the leakage hypothesis. \prettyref{sec:6}
passes from operator-valued dissipation to rank-one atoms along a
branch. \prettyref{sec:7} uses trace-biased branching to obtain the
almost-sure Parseval frame and the strong operator reconstruction
of $R_{0}$.

\section{WR energy trees}\label{sec:2}

Let $H$ be a complex Hilbert space. For any projection $P$ and any
positive operator $R\in B(H)_{+}$, define the weighted residual map
\begin{equation}
\Phi_{P}(R):=R^{1/2}\left(I-P\right)R^{1/2}.\label{eq:b1}
\end{equation}
Given an initial positive operator $R_{0}\in B(H)_{+}$ and a sequence
of projections $\left(P_{n}\right)_{n\ge1}$, the WR iteration is
defined by 
\begin{equation}
R_{n+1}=\Phi_{P_{n+1}}\left(R_{n}\right),\quad n\ge0,
\end{equation}
with $R_{0}$ fixed. For such a sequence, it is known that $0\le R_{n+1}\le R_{n}\le R_{0}$
for all $n$, and that $\left(R_{n}\right)$ converges strongly to
a limit $R_{\infty}$. Moreover, the one-step decomposition 
\begin{equation}
R_{n}-R_{n+1}=R^{1/2}_{n}P_{n+1}R^{1/2}_{n}\ge0
\end{equation}
yields the global energy identity via a telescoping argument: 
\begin{equation}
R_{0}=R_{\infty}+\sum^{\infty}_{n=0}D_{n},\quad D_{n}:=R^{1/2}_{n}P_{n+1}R^{1/2}_{n},\label{eq:b-4}
\end{equation}
where the series converges in the strong operator topology (see e.g.,
\cite{tian2025alternating,tian2025WR}). 

We now lift this construction from a single sequence of projections
to a rooted tree indexed by all possible finite sequences of indices
from $\left\{ 1,\dots,m\right\} $.

\subsection*{WR energy tree}

Fix an integer $m\geq2$ and orthogonal projections 
\[
\left\{ P_{1},\dots,P_{m}\right\} 
\]
on $H$. Let the alphabet be $\mathcal{A}:=\left\{ 1,\dots,m\right\} $.
A finite word is a sequence $w=j_{1}j_{2}\dots j_{n}$ with $j_{k}\in\mathcal{A}$
and $n\ge1$. The empty word is denoted by $\emptyset$. The set of
all finite words is 
\[
\mathcal{W}:=\left\{ \emptyset\right\} \cup\bigcup_{n\ge1}\mathcal{A}^{n}.
\]
For a nonempty word $w=j_{1}\dots j_{n}$, its prefix of length $n-1$
is denoted $w^{-}:=j_{1}\dots j_{n-1}$. For any word $w$ and letter
$j\in\mathcal{A}$, we write $wj$ for the concatenation. An infinite
word, or branch, is a sequence $\omega=\left(j_{1},j_{2},\dots\right)\in\mathcal{A}^{\mathbb{N}}$.
Its prefix of length $n$ is written as $\omega|_{n}:=j_{1}\dots j_{n}\in\mathcal{A}^{n}$.

We define a family of operators indexed by $\mathcal{W}$. At the
root, set $R_{\emptyset}:=R_{0}$. For a word $w=j_{1}\dots j_{n}$
of length $n\ge1$, we define the residual operator $R_{w}$ recursively
by 
\[
R_{w}:=\Phi_{P_{j_{n}}}\left(R_{w^{-}}\right)=R^{1/2}_{w^{-}}\left(I-P_{j_{n}}\right)R^{1/2}_{w^{-}}.
\]
Additionally, for a nonempty word $w=j_{1}\dots j_{n}$, we define
the associated dissipated piece 
\[
D_{w}:=R^{1/2}_{w^{-}}P_{j_{n}}R^{1/2}_{w^{-}}.
\]
By construction, we have the identity $R_{w^{-}}-R_{w}=D_{w}$ for
every nonempty word $w$. Each $R_{w}$ and $D_{w}$ is positive,
and they satisfy $0\le D_{w}\le R_{w^{-}}\le R_{0}$ in the Loewner
order. We refer to the family $\left\{ R_{w},D_{w}:w\in\mathcal{W}\right\} $
as the WR energy tree.
\begin{thm}[pathwise energy decomposition]
\label{thm:B-1} Let $H$ be a Hilbert space, let $P_{1},\dots,P_{m}$
be orthogonal projections on $H$, and let $R_{0}\in B(H)_{+}$. Construct
the family $\left\{ R_{w},D_{w}\right\} $ as above. Then the following
hold.
\begin{enumerate}
\item For every word $w\in\mathcal{W}$ and every letter $j\in\mathcal{A}$,
we have $R_{wj}\le R_{w}$ and $D_{wj}\le R_{w}$. Along any chain
of extensions $w_{0}\subset w_{1}\subset\dots\subset w_{n}$, the
sequence 
\[
R_{w_{0}}\ge R_{w_{1}}\ge\dots\ge R_{w_{n}}\ge0
\]
is decreasing in the Loewner order. In particular, for any infinite
word $\omega=\left(j_{1},j_{2},\dots\right)$, the path operators
$R_{\omega|_{n}}$ form a decreasing sequence of positive operators
bounded by $R_{0}$.
\item For each infinite word $\omega\in\mathcal{A}^{\mathbb{N}}$, there
exists a unique positive operator $R_{\infty}\left(\omega\right)\in B(H)_{+}$
such that 
\[
R_{\omega|_{n}}\xrightarrow[n\to\infty]{s}R_{\infty}\left(\omega\right).
\]
Therefore, for any $x\in H$, $\Vert R_{\omega|_{n}}x-R_{\infty}\left(\omega\right)x\Vert\to0$,
and the scalar sequence $\left\langle x,R_{\omega|_{n}}x\right\rangle $
is decreasing and converges to $\left\langle x,R_{\infty}\left(\omega\right)x\right\rangle $.
\item For any infinite word $\omega$, we have the infinite-path identity
\[
R_{0}=R_{\infty}\left(\omega\right)+\sum^{\infty}_{k=1}D_{\omega|_{k}},
\]
where the series converges in the strong operator topology. The partial
sums $\sum^{n}_{k=1}D_{\omega|_{k}}$ form an increasing sequence
of positive operators bounded by $R_{0}$.
\item For any infinite word $\omega$ and any vector $x\in H$, the scalar
quantities $d_{k}\left(\omega,x\right):=\left\langle x,D_{\omega|_{k}}x\right\rangle \ge0$
satisfy 
\[
\sum^{\infty}_{k=1}d_{k}\left(\omega,x\right)=\left\langle x,R_{0}x\right\rangle -\left\langle x,R_{\infty}\left(\omega\right)x\right\rangle .
\]
\end{enumerate}
\end{thm}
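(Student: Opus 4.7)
The plan is to observe that all four parts follow from Loewner-order manipulations together with the fact that a bounded decreasing sequence of positive operators converges in the strong operator topology. For part (1), the key inequality is $0\le I-P\le I$ for any projection $P$, which yields $\Phi_P(R)\le R$ and $R^{1/2}PR^{1/2}\le R$ for every positive $R$. Applying this at each node with $R=R_w$ and $P=P_j$ gives $R_{wj}\le R_w$ and $D_{wj}\le R_w$, and chain monotonicity along $w_0\subset w_1\subset\cdots\subset w_n$ follows by induction on the length of the extension. In particular the restriction to prefixes $\omega|_n$ of a branch gives the stated decreasing sequence bounded above by $R_0$.

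The only step with any analytic content is (2). Fix $\omega$; by (1) the sequence $\{R_{\omega|_n}\}$ is decreasing in the Loewner order and uniformly bounded by $R_0$, so for each $x\in H$ the scalar sequence $\langle x,R_{\omega|_n}x\rangle$ is decreasing and bounded below, hence convergent. To upgrade from weak to strong convergence I would use the standard inequality $\|Ay\|^2\le\|A\|\langle y,Ay\rangle$ for positive $A$, applied to $A=R_{\omega|_m}-R_{\omega|_n}\ge0$ with $m\le n$ and $\|A\|\le\|R_0\|$. This gives
\[
\|(R_{\omega|_m}-R_{\omega|_n})x\|^2\le\|R_0\|\bigl(\langle x,R_{\omega|_m}x\rangle-\langle x,R_{\omega|_n}x\rangle\bigr),
\]
so $\{R_{\omega|_n}x\}$ is Cauchy in $H$. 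The pointwise limit defines a bounded positive operator $R_\infty(\omega)$ with $\|R_\infty(\omega)\|\le\|R_0\|$, and the monotone convergence of $\langle x,R_{\omega|_n}x\rangle$ to $\langle x,R_\infty(\omega)x\rangle$ is automatic from strong convergence.

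Parts (3) and (4) are then pure bookkeeping. From the one-step identity $R_{w^-}-R_w=D_w$, telescoping along the prefixes of $\omega$ yields $R_0-R_{\omega|_n}=\sum_{k=1}^{n}D_{\omega|_k}$ at every finite level, so the partial dissipation sums form an increasing sequence of positive operators bounded above by $R_0$. Passing $n\to\infty$ in the strong operator topology, using (2), gives $R_0=R_\infty(\omega)+\sum_{k=1}^{\infty}D_{\omega|_k}$. Evaluating this identity in the quadratic form at $x$ and using $d_k(\omega,x)=\langle x,D_{\omega|_k}x\rangle\ge0$ yields (4); the nonnegativity of the terms guarantees that the scalar series converges, and its sum equals the finite quantity $\langle x,R_0x\rangle-\langle x,R_\infty(\omega)x\rangle$.

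There is no significant obstacle. The theorem is a deterministic branch-wise identity, and the probabilistic content only enters in later sections when $\omega$ is sampled against $\nu_x$ or $\nu_{\mathrm{tr}}$. The Cauchy estimate above is the one place care is needed, and I expect it to reappear in quantitative form in the extinction argument of \prettyref{sec:5}, where it translates pathwise control of the scalar energy $\langle x,R_{\omega|_n}x\rangle$ into strong control of the operator residuals themselves.
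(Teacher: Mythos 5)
Your proposal is correct and follows essentially the same route as the paper: Loewner monotonicity from conjugating $0\le P_j\le I$ by $R_w^{1/2}$, strong convergence of the bounded decreasing sequence for part (2), and telescoping plus quadratic forms for parts (3) and (4). The only difference is cosmetic—where the paper cites the standard monotone convergence theorem for self-adjoint operators, you inline its proof via the estimate $\Vert Ay\Vert^{2}\le\Vert A\Vert\left\langle y,Ay\right\rangle$, which is exactly the argument behind that cited fact.
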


\begin{proof}
We proceed in steps mirroring the basic WR iteration arguments in
\eqref{eq:b1}--\eqref{eq:b-4}.

Fix a word $w\in\mathcal{W}$ and a letter $j\in\mathcal{A}$. By
definition, $R_{wj}=R^{1/2}_{w}\left(I-P_{j}\right)R^{1/2}_{w}$.
For any $x\in H$, 
\[
\left\langle x,R_{wj}x\right\rangle =\Vert\left(I-P_{j}\right)R^{1/2}_{w}x\Vert^{2}\ge0,
\]
so $R_{wj}\ge0$. Furthermore, 
\[
\left\langle x,\left(R_{w}-R_{wj}\right)x\right\rangle =\langle x,R^{1/2}_{w}P_{j}R^{1/2}_{w}x\rangle=\Vert P_{j}R^{1/2}_{w}x\Vert^{2}\ge0.
\]
Hence $R_{w}-R_{wj}\ge0$, establishing $R_{wj}\le R_{w}$. The definition
$D_{wj}=R_{w}-R_{wj}$ implies $0\le D_{wj}\le R_{w}$. By induction,
along any chain $w_{0}\subset w_{1}\subset\dots\subset w_{n}$, the
sequence of operators is decreasing.

Fix $\omega=\left(j_{1},j_{2},\dots\right)$. Set $R_{n}:=R_{\omega|_{n}}$.
It follows that $0\le R_{n+1}\le R_{n}\le R_{0}$ for all $n\ge0$.
Since $\left(R_{n}\right)$ is a bounded monotone sequence of self-adjoint
operators, it converges strongly to a unique positive operator $R_{\infty}\left(\omega\right)$;
see e.g., \cite{MR493419}.

Let $D_{n}:=D_{\omega|_{n}}$. By definition, $D_{n}=R_{n-1}-R_{n}$.
Summing from $n=1$ to $N$ yields 
\[
\sum^{N}_{n=1}D_{n}=R_{0}-R_{N}.
\]
The partial sums of the series $\sum D_{n}$ are increasing and bounded
above by $R_{0}$, so they converge strongly to an operator $D\left(\omega\right)$.
Taking limits as $N\to\infty$, we obtain $D\left(\omega\right)=R_{0}-R_{\infty}\left(\omega\right)$,
which proves (3).

For arbitrary $x\in H$, taking inner products in the finite telescoping
identity gives 
\[
\left\langle x,R_{0}x\right\rangle =\left\langle x,R_{\omega|_{N}}x\right\rangle +\sum^{N}_{k=1}\left\langle x,D_{\omega|_{k}}x\right\rangle .
\]
Letting $N\to\infty$, and using the strong convergence established
in Step 2, we obtain 
\[
\left\langle x,R_{0}x\right\rangle =\left\langle x,R_{\infty}\left(\omega\right)x\right\rangle +\sum^{\infty}_{k=1}d_{k}\left(\omega,x\right),
\]
which is equivalent to the statement in (4). 
\end{proof}

\subsection*{Boundary space and cylinder sets}

Let 
\[
\Omega:=\mathcal{A}^{\mathbb{N}}=\left\{ \left(j_{1},j_{2},\dots\right):j_{n}\in\mathcal{A}\right\} 
\]
be the space of all infinite words, equipped with the product topology.
For each finite word $w=j_{1}\dots j_{n}\in\mathcal{W}$ we define
the cylinder set 
\[
\left[w\right]:=\left\{ \omega\in\Omega:\omega|_{n}=w\right\} .
\]
The collection of cylinders $\left\{ \left[w\right]:w\in\mathcal{W}\right\} $
forms a base for the product topology on $\Omega$. The associated
Borel $\sigma$-algebra will be denoted by $\mathcal{B}\left(\Omega\right)$.

For each $n\ge1$, the coordinate projection 
\[
\pi_{n}:\Omega\to\mathcal{A},\quad\pi_{n}\left(\omega\right)=j_{n}
\]
is continuous; the $\sigma$-algebra generated by $\left\{ \pi_{1},\dots,\pi_{n}\right\} $
coincides with the $\sigma$-algebra generated by the cylinders of
length $n$.

\subsection*{Boundary WR maps}

For each $\omega\in\Omega$ and $n\ge0$, we write 
\[
R_{n}\left(\omega\right):=R_{\omega|_{n}},\qquad D_{n}\left(\omega\right):=\begin{cases}
D_{\omega|_{n}}, & n\ge1,\\
0, & n=0,
\end{cases}
\]
with the convention $R_{\omega|_{0}}:=R_{\emptyset}=R_{0}$. The pathwise
limit from \prettyref{thm:B-1} gives a map 
\[
R_{\infty}:\Omega\to B(H)_{+},\quad\omega\mapsto R_{\infty}\left(\omega\right),
\]
characterized by the strong convergence $R_{n}\left(\omega\right)\xrightarrow{\text{s}}R_{\infty}\left(\omega\right)$
as $n\to\infty$ for each fixed $\omega$.

\section{Energy-biased branching and path measures}\label{sec:3}

We continue with the setting and notation from the previous section.
For each word $w\in\mathcal{W}$ we have the WR operators 
\[
R_{w}\in B(H)_{+},\qquad D_{wj}:=R^{1/2}_{w}P_{j}R^{1/2}_{w}\in B(H)_{+}
\]
for $j\in\mathcal{A}=\left\{ 1,\dots,m\right\} $. 

Fix a nonzero vector $x\in H$ with $\left\langle x,R_{0}x\right\rangle >0$.
Define the energy coefficients 
\[
a_{wj}:=\left\langle x,D_{wj}x\right\rangle \ge0.
\]
These numbers represent the amount of $x$-energy dissipated through
channel $j$ at node $w$. 

We call a node $w$ alive if 
\[
\sum^{m}_{j=1}a_{wj}>0.
\]
For any alive node $w$, define the energy-biased transition probabilities
\[
p_{x}(j\mid w):=\frac{a_{wj}}{\sum^{m}_{k=1}a_{wk}},\qquad j\in\mathcal{A}.
\]
By construction, 
\[
p_{x}(j\mid w)\ge0,\qquad\sum^{m}_{j=1}p_{x}(j\mid w)=1.
\]

If $\sum_{j}a_{wj}=0$, then necessarily $a_{wj}=0$ for every $j$,
so no further $x$-energy is dissipated from $R_{w}$ into any child.
In this case we say that $w$ is $x$-dead, and we fix, once and for
all, an arbitrary reference probability vector $q=\left(q_{1},\dots,q_{m}\right)$,
and define 
\[
p_{x}(j\mid w):=q_{j},\qquad j\in\mathcal{A}.
\]

The particular choice of $q$ will play no role in the results below:
all statements about energy dissipation and residual limits depend
only on the biased transitions at $x$-alive nodes, not on the arbitrary
continuation rule at $x$-dead nodes. Thus $p_{x}(\cdot\mid w)$ is
a bona fide probability distribution on $\mathcal{A}$ for every node
$w\in\mathcal{W}$. At alive nodes it is determined intrinsically
by the WR energy dissipation, while at $x$-dead nodes it is arbitrary
but normalized.

For a finite word $w=j_{1}\cdots j_{n}$, define the cylinder weight
\[
\nu_{x}([w]):=p_{x}(j_{1}\mid\emptyset)p_{x}(j_{2}\mid j_{1})\cdots p_{x}(j_{n}\mid j_{1}\cdots j_{n-1}).
\]
This defines a finitely additive set function on the semi-ring of
cylinder sets. For $\omega\in\Omega$ and $n\ge0$, we use the shorthand
\[
D_{\omega|n,j}:=D_{(\omega|n)j},\qquad j\in\mathcal{A},
\]
for the dissipated pieces along the $j$-labelled edges leaving the
node $\omega|n$.
\begin{thm}[WR path measures]
\textup{} \label{thm:c-4} There exists a unique Borel probability
measure $\nu_{x}$ on $\Omega=\mathcal{A}^{\mathbb{N}}$ whose values
on cylinder sets are given by the formula above. Moreover, for each
$n$, 
\[
\nu_{x}\left(\left\{ \omega:\sum\nolimits^{m}_{j=1}\left\langle x,D_{\omega|n,j}x\right\rangle =0\right\} \right)=\sum_{\substack{|w|=n\\
\sum_{j}a_{wj}=0
}
}\nu_{x}([w]).
\]
\end{thm}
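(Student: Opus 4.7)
The plan is to produce $\nu_x$ by a standard Kolmogorov/Carath\'eodory extension from the cylinder weights, with the only real verification being that the weights form a consistent projective system on the tree, and then read off the second assertion by finite additivity.

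First I would check Kolmogorov consistency. For each $n\ge 0$, the cylinder values $\nu_x([w])$ with $|w|=n$ define a probability vector on $\mathcal{A}^n$: at the root, $\nu_x([\emptyset])=1$ by convention, and an induction on $n$ using the identity
\[
\sum_{j\in\mathcal{A}}\nu_x([wj])=\nu_x([w])\sum_{j\in\mathcal{A}}p_x(j\mid w)=\nu_x([w])
\]
gives total mass $1$ at each level. The crucial point here is that $\sum_j p_x(j\mid w)=1$ at \emph{every} node $w$, whether alive or dead. At alive nodes this is the defining normalization of the energy-biased transitions; at $x$-dead nodes it is the reason for introducing the reference probability vector $q$. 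Thus the family $\{\nu_x([w])\}_{|w|=n}$ is a consistent sequence of probability distributions on the finite product spaces $\mathcal{A}^n$.

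Next I would invoke the Kolmogorov extension theorem on the Polish product space $\Omega=\mathcal{A}^{\mathbb{N}}$ (equivalently, since $\mathcal{A}$ is finite so $\Omega$ is a compact metric space, apply Carath\'eodory extension directly to the semi-ring of cylinders, where countable additivity on this semi-ring is automatic by compactness: any countable cover of a cylinder by cylinders has a finite subcover). This produces a unique Borel probability measure $\nu_x$ on $\mathcal{B}(\Omega)$ whose values on cylinder sets agree with the prescribed weights. Uniqueness is immediate from the $\pi$-$\lambda$ theorem since cylinders generate $\mathcal{B}(\Omega)$ and form a $\pi$-system.

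For the second assertion, fix $n$ and set
\[
E_n:=\bigl\{\omega\in\Omega:\textstyle\sum_{j=1}^{m}\langle x,D_{\omega|n,j}x\rangle=0\bigr\}.
\]
Since $D_{\omega|n,j}$ depends on $\omega$ only through $\omega|n$, the event $E_n$ is a cylinder event at depth $n$: it equals the disjoint union $E_n=\bigsqcup_{w}[w]$ over the finitely many $w\in\mathcal{A}^n$ satisfying $\sum_j a_{wj}=0$. Finite additivity of $\nu_x$ then yields exactly the claimed formula. I do not expect any genuine obstacle in this argument; the only point to be careful about is that the consistency check must cover the $x$-dead case, which is handled by the deliberate choice of $p_x(\cdot\mid w)=q$ as a probability vector.
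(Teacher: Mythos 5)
Your proposal is correct and follows essentially the same route as the paper: verify the consistency relation $\nu_x([w])=\sum_j\nu_x([wj])$ using that $p_x(\cdot\mid w)$ is a probability vector at every node (including $x$-dead ones via the reference vector $q$), apply the Kolmogorov--Carath\'eodory extension theorem, and observe that the event in the second assertion is a finite disjoint union of depth-$n$ cylinders over the $x$-dead words. The extra details you supply on countable additivity via compactness and uniqueness via the $\pi$-$\lambda$ theorem are standard refinements of the same argument.
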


\begin{proof}
Since $p_{x}(\cdot\mid w)$ is a probability distribution on $\mathcal{A}$
for every $w\in\mathcal{W}$, the cylinder weights satisfy the consistency
relation 
\[
\nu_{x}([w])=\sum^{m}_{j=1}\nu_{x}([wj]),\qquad w\in\mathcal{W}.
\]
Indeed, 
\[
\sum^{m}_{j=1}\nu_{x}([wj])=\nu_{x}([w])\sum^{m}_{j=1}p_{x}(j\mid w)=\nu_{x}([w]).
\]
By the Kolmogorov-Carathéodory extension theorem, there exists a unique
Borel probability measure $\nu_{x}$ on $\Omega$ extending these
cylinder weights. The second statement follows because the event 
\[
\left\{ \omega:\sum\nolimits_{j}\left\langle x,D_{\omega|n,j}x\right\rangle =0\right\} 
\]
is exactly the disjoint union of cylinders $[w]$ over all words $w$
of length $n$ that are $x$-dead. 
\end{proof}
\begin{rem}
The measure $\nu_{x}$ describes a random WR branch in which, at an
$x$-alive node $w$, the next projection index $j$ is selected with
probability proportional to the $x$-energy $\left\langle x,D_{wj}x\right\rangle $
dissipated by $P_{j}$ at that node. At $x$-dead nodes the branching
rule is no longer dictated by energy, and the choice of the next index
is arbitrary but normalized. Thus the randomness is entirely operator-theoretic
up to the point where the $x$-energy dynamics terminate. 
\end{rem}

\begin{rem}
Fix $w\in\mathcal{W}$ such that $\sum^{m}_{j=1}a_{wj}=0$. In operator
terms this means 
\[
\left\langle x,R^{1/2}_{w}P_{j}R^{1/2}_{w}x\right\rangle =0\quad\text{for all }j,
\]
or equivalently, 
\[
P_{j}R^{1/2}_{w}x=0\quad\text{for all }j.
\]
Thus the residual vector $R^{1/2}_{w}x$ lies entirely in the common
kernel 
\[
\bigcap^{m}_{j=1}\ker P_{j},
\]
and none of the channels $P_{1},\dots,P_{m}$ remove any further $x$-energy
at node $w$. From $\sum_{j}a_{wj}=0$ one cannot conclude that $R_{w}x=0$,
nor that $D_{wj}$ vanish as operators. The statement concerns only
the action of the WR tree on the specific state $x$; other vectors
$y\in H$ may still experience nonzero dissipation through some $P_{j}$.
Accordingly, the operator-valued WR tree $\{R_{wj},D_{wj}\}$ may
continue to evolve below $w$, even though the $x$-energy dynamics
have terminated there. 
\end{rem}

\subsection{Special cases: splitting and binary complementarity}\label{subsec:c-1}

We now discuss situations in which the WR branching probabilities
simplify or admit closed-form expressions.

\subsubsection*{Splitting on the evolving support}

Since the WR iterates satisfy $0\leq R_{w}\leq R_{0}$ for all words
$w$, the Douglas range-inclusion theorem (\cite{MR203464}) implies
that 
\begin{equation}
ran\,(R^{1/2}_{w})\subset ran\,(R^{1/2}_{0}),\quad w\in\mathcal{W}.\label{eq:c-1}
\end{equation}
Thus all evolving supports of the WR tree lie inside the fixed initial
support 
\begin{equation}
H_{0}:=\overline{ran}(R^{1/2}_{0}).\label{eq:c-2}
\end{equation}

Suppose now that the projections $P_{1},\dots,P_{m}$ satisfy the
splitting condition: 
\begin{equation}
\sum^{m}_{j=1}P_{j}=I\quad\text{on }H_{0}.\label{eq:c-3}
\end{equation}
It follows that $\sum^{m}_{j=1}P_{j}=I$ on $\overline{ran}(R^{1/2}_{w})$
for every $w\in\mathcal{W}$. Then we obtain the decomposition
\[
R_{w}=\sum^{m}_{j=1}D_{wj},\quad\left\langle x,R_{w}x\right\rangle =\sum^{m}_{j=1}a_{wj}
\]
In particular, for alive nodes, 
\[
p_{x}\left(j\mid w\right)=\frac{a_{wj}}{\sum^{m}_{k=1}a_{wk}}=\frac{\left\langle x,D_{wj}x\right\rangle }{\left\langle x,R_{w}x\right\rangle }.
\]
Thus under splitting hypothesis, the probability of choosing channel
$j$ at node $w$ is exactly the fraction of the current residual
energy of $x$ that is dissipated via $P_{j}$.

\subsubsection*{The binary case}

Assume $m=2$, and $P_{1}+P_{2}=I$ on $H_{0}$. Then $D_{w1}+D_{w2}=R_{w}$,
and we have the cross-identities 
\[
R_{w1}=D_{w2},\qquad R_{w2}=D_{w1}.
\]
Consequently, the residual energies are additive: 
\[
\left\langle x,R_{w}x\right\rangle =\left\langle x,R_{w1}x\right\rangle +\left\langle x,R_{w2}x\right\rangle .
\]
This conservation of residual energy allows us to define a \textit{residual
path measure} $\mu_{x}$, distinct from the dissipation measure $\nu_{x}$.
Define 
\[
\mu_{x}\left(\left[w\right]\right):=\frac{\left\langle x,R_{w}x\right\rangle }{\left\langle x,R_{0}x\right\rangle }.
\]
Then 
\[
\mu_{x}\left(\left[w\right]\right)=\mu_{x}\left(\left[w1\right]\right)+\mu_{x}\left(\left[w2\right]\right).
\]
Since $\mu_{x}\left(\left[\emptyset\right]\right)=1$ (assuming $\left\Vert x\right\Vert =1$),
so $\mu_{x}$ extends to a cylinder-additive probability measure on
$\Omega$.
\begin{rem}[Comparison of measures]
\textit{} In this binary setting, we have two natural measures:
\begin{enumerate}
\item The \textit{dissipation measure} $\nu_{x}$ (from \prettyref{thm:c-4}),
driven by $D_{wj}$, which tracks the path of captured energy.
\item The \textit{residual measure} $\mu_{x}$, driven by $R_{wj}$, which
tracks the path of surviving energy.
\end{enumerate}
Because $D_{w1}=R_{w2}$, these measures are conjugate. That is, the
probability of dissipating into channel $1$ is proportional to the
residual weight of channel $2$: 
\[
p_{x}\left(1\mid w\right)=\frac{\left\langle x,D_{w1}x\right\rangle }{\left\langle x,R_{w}x\right\rangle }=\frac{\left\langle x,R_{w2}x\right\rangle }{\left\langle x,R_{w}x\right\rangle }=\frac{\mu_{x}\left(\left[w2\right]\right)}{\mu_{x}\left(\left[w\right]\right)}.
\]
The residual measure $\mu_{x}$ is special here because it admits
the explicit closed-form representation $\mu_{x}\left(\left[w\right]\right)=\left\langle x,R_{w}x\right\rangle /\left\langle x,R_{0}x\right\rangle $,
which is not available in the general $m$ case.
\end{rem}

\section{Random branches and energy balance}\label{sec:4}

We continue to work with a fixed nonzero state $x\in H$ with $\left\langle x,R_{0}x\right\rangle >0$,
and with the WR tree $\{R_{w},D_{wj}\}$ and the path measure $\nu_{x}$
constructed in \prettyref{sec:3}. Throughout this section we equip
$\Omega=\mathcal{A}^{\mathbb{N}}$ with the canonical filtration $\{\mathcal{F}_{n}\}_{n\ge0}$,
where $\mathcal{F}_{n}$ is the $\sigma$-algebra generated by cylinder
sets of length $n$.

\subsection{Measurability and random boundary operators}

Our first observation is that the WR boundary limit $R_{\infty}(\omega)$
can be viewed as a random positive operator in a natural sense.
\begin{prop}
\label{prop:d-1} For all $x,y\in H$, the functions 
\[
\omega\longmapsto\left\langle x,R_{\omega|n}y\right\rangle ,\qquad\omega\longmapsto\left\langle x,R_{\infty}(\omega)y\right\rangle 
\]
are $\mathcal{F}_{n}$-measurable and $\mathcal{F}_{\infty}$-measurable,
respectively, where $\mathcal{F}_{\infty}=\sigma\left(\bigcup_{n\ge0}\mathcal{F}_{n}\right)$.
In particular, $\omega\mapsto\left\langle x,R_{\infty}(\omega)x\right\rangle $
is a nonnegative $\nu_{x}$-measurable function on $\Omega$. 
\end{prop}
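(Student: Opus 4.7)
The plan is to reduce the proposition to two elementary observations: the finite-level maps $\omega \mapsto R_{\omega|n}$ are operator-valued simple functions on $\Omega$, and the pathwise strong convergence already established in \prettyref{thm:B-1}(2) transports measurability directly to the boundary limit.

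For the $\mathcal{F}_{n}$-measurability claim, I would first note that $\omega \mapsto \omega|_{n}$ takes only the $m^{n}$ values $w \in \mathcal{A}^{n}$ and is constant on each length-$n$ cylinder $[w]$; it is therefore $\mathcal{F}_{n}$-measurable, since $\mathcal{F}_{n}$ is by definition generated by these cylinders. Composing with the deterministic assignment $w \mapsto R_{w}$ on the finite set $\mathcal{A}^{n}$ shows that $\omega \mapsto R_{\omega|n}$ is an operator-valued simple function. Pairing against fixed $x,y \in H$ then yields the finite expansion
\[
\langle x, R_{\omega|n} y\rangle = \sum_{|w|=n} \langle x, R_{w} y\rangle \, \mathbf{1}_{[w]}(\omega),
\]
a linear combination of indicators of $\mathcal{F}_{n}$-sets, which is manifestly $\mathcal{F}_{n}$-measurable.

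For the boundary claim, I would invoke part (2) of \prettyref{thm:B-1}, which gives strong convergence $R_{\omega|n} \to R_{\infty}(\omega)$ for \emph{every} $\omega \in \Omega$. Taking sesquilinear forms produces the scalar convergence
\[
\langle x, R_{\omega|n} y\rangle \longrightarrow \langle x, R_{\infty}(\omega) y\rangle \qquad (\omega \in \Omega).
\]
Each term on the left is $\mathcal{F}_{n}$-measurable, hence $\mathcal{F}_{\infty}$-measurable, so the pointwise limit is $\mathcal{F}_{\infty}$-measurable by the standard fact that pointwise limits of measurable scalar functions are measurable. Specializing to $y = x$, positivity of $R_{\infty}(\omega)$ supplies nonnegativity; and since $\mathcal{F}_{\infty}$ coincides with the Borel $\sigma$-algebra $\mathcal{B}(\Omega)$ (the length-$n$ cylinders form a countable base for the product topology on the compact metrizable space $\Omega$), the limit is automatically $\nu_{x}$-measurable.

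This is essentially a bookkeeping proposition, and I do not anticipate any real obstacle. The one point requiring attention is that the convergence in \prettyref{thm:B-1}(2) is genuinely pathwise, holding for every $\omega$ rather than only $\nu_{x}$-almost surely; had it been only almost sure, one would need either to redefine $R_{\infty}$ on an exceptional set or to pass to the $\nu_{x}$-completion of $\mathcal{F}_{\infty}$, but neither maneuver is needed here.
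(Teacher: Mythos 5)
Your proposal is correct and follows essentially the same route as the paper: it writes $\langle x,R_{\omega|n}y\rangle$ as a simple function constant on length-$n$ cylinders and then uses the pathwise strong convergence of \prettyref{thm:B-1}(2) to pass measurability to the limit. The extra remarks (the explicit indicator expansion and the identification $\mathcal{F}_{\infty}=\mathcal{B}(\Omega)$) are harmless refinements of the same argument.
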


\begin{proof}
Fix $x,y\in H$. For each $n$, the operator $R_{\omega|n}$ depends
only on the prefix $\omega|n$. There are only finitely many words
of length $n$, so $\omega\mapsto R_{\omega|n}$ is a simple, piecewise
constant map on the partition of $\Omega$ into length-$n$ cylinders.
It follows that 
\[
\omega\longmapsto\left\langle x,R_{\omega|n}y\right\rangle 
\]
is a simple $\mathcal{F}_{n}$-measurable function. By \prettyref{thm:B-1},
for each fixed $\omega$ we have strong convergence 
\[
R_{\omega|n}\xrightarrow[n\to\infty]{s}R_{\infty}(\omega).
\]
In particular, for fixed $x,y\in H$, 
\[
\left\langle x,R_{\omega|n}y\right\rangle \longrightarrow\left\langle x,R_{\infty}(\omega)y\right\rangle 
\]
pointwise in $\omega$. Since pointwise limits of measurable functions
are measurable, $\omega\mapsto\left\langle x,R_{\infty}(\omega)y\right\rangle $
is $\mathcal{F}_{\infty}$-measurable. 
\end{proof}
Thus the WR tree yields, for each initial state $x$, a well-defined
random boundary energy $\omega\mapsto\left\langle x,R_{\infty}(\omega)x\right\rangle $.
The next result shows that this random energy arises as the limit
of a natural supermartingale.

\subsection{Energy supermartingale and boundary limit}

Define the real-valued process 
\begin{equation}
M_{n}(\omega):=\left\langle x,R_{\omega|n}x\right\rangle ,\qquad n\ge0,\ \omega\in\Omega.\label{eq:d-1}
\end{equation}
By the monotonicity of the WR residuals along each branch, $\{M_{n}(\omega)\}_{n\ge0}$
is a nonincreasing sequence of nonnegative numbers for each fixed
$\omega$. 
\begin{thm}[Energy supermartingale]
 \label{thm:d-2} Let $x\in H$ with $\left\langle x,R_{0}x\right\rangle >0$,
and let $\nu_{x}$ be the WR path measure from \prettyref{thm:c-4}.
Then the process $\{M_{n}\}_{n\ge0}$ defined by \eqref{eq:d-1} is
a bounded nonnegative supermartingale with respect to $(\nu_{x},\mathcal{F}_{n})$.
In particular:
\begin{enumerate}
\item For each $n$, $M_{n}$ is $\mathcal{F}_{n}$-measurable and $0\le M_{n}(\omega)\le\left\langle x,R_{0}x\right\rangle $
for all $\omega\in\Omega$. 
\item For each $n$, 
\[
\mathbb{E}_{\nu_{x}}\left[M_{n+1}\mid\mathcal{F}_{n}\right]\le M_{n}(\omega)\quad\text{for }\nu_{x}\text{-almost every }\omega.
\]
\item There exists an integrable random variable $M_{\infty}$ such that
\[
M_{n}\longrightarrow M_{\infty}\quad\text{almost surely and in }L^{1}(\nu_{x}).
\]
Moreover, for $\nu_{x}$-almost every $\omega$, 
\[
M_{\infty}(\omega)=\left\langle x,R_{\infty}(\omega)x\right\rangle .
\]
\end{enumerate}
\end{thm}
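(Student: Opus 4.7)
The plan is to verify the three claims in order, treating the supermartingale property as the substantive step. Part (1) is essentially bookkeeping: $M_n$ depends only on $\omega|n$ and is therefore a simple $\mathcal{F}_n$-measurable function (as already recorded in \prettyref{prop:d-1}), and the Loewner bound $R_{\omega|n} \le R_0$ from \prettyref{thm:B-1} gives $0 \le M_n(\omega) \le \langle x, R_0 x\rangle$.

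For part (2), I would compute the conditional expectation cylinder by cylinder. Since $\mathcal{F}_n$ is generated by the cylinders $[w]$ with $|w| = n$ and $M_{n+1}$ is constant on $[wj]$ for $j \in \mathcal{A}$, we have
\[
\mathbb{E}_{\nu_x}[M_{n+1}\mid\mathcal{F}_n](\omega) \;=\; \sum_{j=1}^m p_x(j\mid w)\,\bigl\langle x, R_{wj} x\bigr\rangle \qquad \text{on } [w].
\]
Using the one-step identity $R_{wj} = R_w - D_{wj}$, this becomes $\langle x, R_w x\rangle - \sum_j p_x(j\mid w)\, a_{wj}$. At an $x$-alive node $w$, the definition $p_x(j\mid w) = a_{wj}/\sum_k a_{wk}$ makes the subtracted sum equal to $\sum_j a_{wj}^2 / \sum_k a_{wk} \ge 0$, yielding the strict supermartingale drop. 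At an $x$-dead node all $a_{wj}$ vanish, the subtracted sum is zero, and one actually has equality. (As a sanity check, $a_{wj}=0$ for every $j$ forces $P_j R_w^{1/2} x = 0$, so $R_{wj}x = R_w x$, consistent with the conditional expectation collapsing to $M_n$.) In either case $\mathbb{E}_{\nu_x}[M_{n+1}\mid\mathcal{F}_n] \le M_n$ pointwise, which is stronger than the $\nu_x$-a.s.\ inequality claimed.

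For part (3), the nonnegative bounded supermartingale $\{M_n\}$ converges almost surely and in $L^1$ to some $M_\infty \in L^1(\nu_x)$ by Doob's convergence theorem (together with the uniform bound, which secures uniform integrability). To identify $M_\infty$, I invoke the pathwise strong convergence $R_{\omega|n} \xrightarrow{s} R_\infty(\omega)$ from \prettyref{thm:B-1}(2), which gives $M_n(\omega) \to \langle x, R_\infty(\omega) x\rangle$ for \emph{every} $\omega \in \Omega$. Uniqueness of almost-sure limits then yields $M_\infty(\omega) = \langle x, R_\infty(\omega) x\rangle$ for $\nu_x$-almost every $\omega$.

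No step is genuinely hard, but the most delicate point is the conditional-expectation calculation at $x$-dead nodes, where the biased transitions are replaced by the arbitrary reference vector $q$. I would emphasize that the argument goes through uniformly because at dead nodes the integrand $\langle x, R_{wj} x\rangle$ itself is independent of $j$ (it equals $\langle x, R_w x\rangle$ regardless of how $P_j$ acts on the residual), so the arbitrary choice of $q$ never enters the final estimate. This is the structural reason the path measure is insensitive to the continuation rule declared after the $x$-energy dynamics terminate.
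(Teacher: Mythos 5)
Your proof is correct and follows essentially the same route as the paper: cylinder-by-cylinder computation of the conditional expectation, followed by Doob convergence and identification of the limit via the pathwise strong convergence from \prettyref{thm:B-1}. The only (cosmetic) difference is in part (2), where the paper simply bounds each term by $\left\langle x,R_{wj}x\right\rangle \le\left\langle x,R_{w}x\right\rangle$ while you compute the exact decrement $\sum_{j}a_{wj}^{2}/\sum_{k}a_{wk}$ — a refinement the paper itself deploys later in the contraction estimate of \prettyref{thm:5-3}.
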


\begin{proof}
(1) Measurability and boundedness follow from \prettyref{prop:d-1}
and from the operator inequalities $0\le R_{\omega|n}\le R_{0}$.

(2) Fix $n\ge0$ and a word $w$ of length $n$. On the cylinder $[w]$
we have $\omega|n=w$ and 
\[
M_{n}(\omega)=\left\langle x,R_{w}x\right\rangle .
\]
At the next step, the process moves to a child $wj$ with probability
$p_{x}(j\mid w)$, and the corresponding residual is $R_{wj}$. Therefore,
for $\omega\in[w]$, 
\[
\mathbb{E}_{\nu_{x}}\left[M_{n+1}\mid\mathcal{F}_{n}\right]=\sum_{j\in\mathcal{A}}p_{x}(j\mid w)\left\langle x,R_{wj}x\right\rangle .
\]
By \prettyref{thm:B-1} we have $0\le R_{wj}\le R_{w}$ for every
$j$, hence 
\[
\left\langle x,R_{wj}x\right\rangle \le\left\langle x,R_{w}x\right\rangle \quad\text{for all }j.
\]
Since $\sum_{j}p_{x}(j\mid w)=1$, we obtain 
\[
\sum_{j}p_{x}(j\mid w)\left\langle x,R_{wj}x\right\rangle \le\left\langle x,R_{w}x\right\rangle =M_{n}(\omega),
\]
for all $\omega\in[w]$. This proves the supermartingale inequality
\[
\mathbb{E}_{\nu_{x}}\left[M_{n+1}\mid\mathcal{F}_{n}\right]\le M_{n}\quad\text{almost surely.}
\]

(3) The process $\{M_{n}\}$ is nonnegative and uniformly bounded
by $\left\langle x,R_{0}x\right\rangle $, hence uniformly integrable.
By the supermartingale convergence theorem, there exists an integrable
random variable $M_{\infty}$ such that $M_{n}\to M_{\infty}$ almost
surely and in $L^{1}(\nu_{x})$. On the other hand, for each fixed
$\omega$, the sequence $\{R_{\omega|n}\}$ converges strongly to
$R_{\infty}(\omega)$ by \prettyref{thm:B-1} . In particular, for
our fixed $x$, 
\[
\left\langle x,R_{\omega|n}x\right\rangle \longrightarrow\left\langle x,R_{\infty}(\omega)x\right\rangle \quad\text{for every }\omega\in\Omega.
\]
Thus the pointwise limit of $M_{n}(\omega)$ is exactly $\left\langle x,R_{\infty}(\omega)x\right\rangle $.
Since almost sure limits are unique, it follows that 
\[
M_{\infty}(\omega)=\left\langle x,R_{\infty}(\omega)x\right\rangle \quad\text{for }\nu_{x}\text{-almost every }\omega.
\]
\end{proof}
\begin{rem}
The theorem shows that, along a $\nu_{x}$-random WR branch, the $x$-energy
of the residual operator decreases as a supermartingale and converges
almost surely to the boundary value $\left\langle x,R_{\infty}(\omega)x\right\rangle $.
In particular, the extinction event 
\[
E_{\mathrm{ext}}(x):=\left\{ \omega\in\Omega:\left\langle x,R_{\infty}(\omega)x\right\rangle =0\right\} 
\]
has probability 
\[
\nu_{x}(E_{\mathrm{ext}}(x))=\nu_{x}\left(\left\{ \omega:M_{\infty}(\omega)=0\right\} \right),
\]
and the complementary event describes random branches along which
a positive amount of $x$-energy survives at the boundary. This is
one natural sense in which the random WR boundary $\{R_{\infty}(\omega)\}$
encodes a  distribution of surviving energy in $B(H)_{+}$.
\end{rem}

\subsection{Expected energy balance along random branches}

Finally, we record an exact energy balance identity at the level of
expectations. For $\omega\in\Omega$ and $k\ge1$, let 
\[
\Delta_{k}(\omega):=D_{\omega|k-1,j_{k}},\qquad A_{k}(\omega):=\left\langle x,\Delta_{k}(\omega)x\right\rangle .
\]
By the pathwise telescoping identity, 
\[
R_{0}-R_{\infty}(\omega)=\sum^{\infty}_{k=1}\Delta_{k}(\omega)
\]
in the strong operator topology, and by taking inner products with
$x$ we obtain, for each $\omega$, 
\[
\left\langle x,R_{0}x\right\rangle =\left\langle x,R_{\infty}(\omega)x\right\rangle +\sum^{\infty}_{k=1}A_{k}(\omega),
\]
where the series converges monotonically. Applying the monotone convergence
theorem and \prettyref{thm:d-2}, we obtain:
\begin{prop}[Energy balance for random WR branches]
 \label{prop:d-4} Let $x\in H$ with $\left\langle x,R_{0}x\right\rangle >0$.
Then the random variables 
\[
A_{k}(\omega):=\left\langle x,\Delta_{k}(\omega)x\right\rangle ,\qquad k\ge1,
\]
are nonnegative and integrable, and 
\[
\left\langle x,R_{0}x\right\rangle =\mathbb{E}_{\nu_{x}}\left[\left\langle x,R_{\infty}(\omega)x\right\rangle \right]+\sum^{\infty}_{k=1}\mathbb{E}_{\nu_{x}}\left[A_{k}(\omega)\right].
\]
In particular, the expected total $x$-energy dissipated along a $\nu_{x}$-random
WR branch is finite and equal to 
\[
\sum^{\infty}_{k=1}\mathbb{E}_{\nu_{x}}\left[A_{k}(\omega)\right]=\left\langle x,R_{0}x\right\rangle -\mathbb{E}_{\nu_{x}}\left[\left\langle x,R_{\infty}(\omega)x\right\rangle \right].
\]
\end{prop}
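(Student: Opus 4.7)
The plan is to derive the identity pathwise from \prettyref{thm:B-1}(4), observe that all terms are nonnegative so that a Tonelli-type interchange is lossless, and then invoke \prettyref{thm:d-2}(3) for the integrability of the boundary energy. No new analytical input is required beyond what has already been set up.

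First I would verify measurability, nonnegativity, and boundedness of each $A_k$. Since $\Delta_k(\omega) = D_{\omega|_k}$ depends only on the first $k$ letters of $\omega$, the function $\omega\mapsto A_k(\omega)$ is a simple $\mathcal{F}_k$-measurable function taking at most $m^k$ values, and it is nonnegative because $D_{\omega|_k}\ge 0$. The Loewner bound $D_{\omega|_k}\le R_{\omega|_{k-1}}\le R_0$ from \prettyref{thm:B-1}(1) gives the uniform pointwise estimate
\[
0\le A_k(\omega)\le\left\langle x,R_0 x\right\rangle,
\]
which shows each $A_k$ is bounded and hence integrable against $\nu_x$.

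Next I would invoke the pathwise telescoping identity of \prettyref{thm:B-1}(4), which in our current notation reads
\[
\sum_{k=1}^{\infty}A_k(\omega)=\left\langle x,R_0 x\right\rangle-\left\langle x,R_\infty(\omega)x\right\rangle\qquad\text{for every }\omega\in\Omega,
\]
with monotonically nondecreasing partial sums bounded by $\left\langle x,R_0 x\right\rangle$. By \prettyref{thm:d-2}(3) the random variable $\omega\mapsto\left\langle x,R_\infty(\omega)x\right\rangle$ equals $M_\infty$ and lies in $L^1(\nu_x)$, so the right-hand side is integrable and its expectation is $\left\langle x,R_0 x\right\rangle-\mathbb{E}_{\nu_x}\bigl[\left\langle x,R_\infty(\cdot)x\right\rangle\bigr]$.

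Finally, since each $A_k$ is nonnegative, the monotone convergence theorem (or equivalently Tonelli applied to the product of $\nu_x$ with counting measure on $\mathbb{N}$) permits interchanging expectation and summation:
\[
\mathbb{E}_{\nu_x}\!\left[\sum_{k=1}^{\infty}A_k\right]=\sum_{k=1}^{\infty}\mathbb{E}_{\nu_x}[A_k].
\]
Combining this with the previous display yields the stated balance. The only point that needs care is confirming that $\left\langle x,R_\infty(\omega)x\right\rangle$ is genuinely integrable so that subtracting it from the constant $\left\langle x,R_0 x\right\rangle$ is meaningful; this is already guaranteed by \prettyref{thm:d-2}(3), so there is no real obstacle — the proof is essentially a two-line consequence of the pathwise identity and monotone convergence.
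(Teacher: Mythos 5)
Your proposal is correct and follows essentially the same route as the paper: both rest on the pathwise telescoping identity from \prettyref{thm:B-1}(4), the identification of $\left\langle x,R_{\infty}(\omega)x\right\rangle$ with the supermartingale limit $M_{\infty}$ from \prettyref{thm:d-2}, and a monotone-convergence/Tonelli interchange of expectation and summation. Your added remarks on the $\mathcal{F}_{k}$-measurability and uniform boundedness of each $A_{k}$ are accurate and merely make explicit what the paper leaves implicit.
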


\begin{proof}
For each $\omega$ the series $\sum_{k\ge1}A_{k}(\omega)$ converges
monotonically to $\left\langle x,R_{0}x\right\rangle -\left\langle x,R_{\infty}(\omega)x\right\rangle $,
so the pointwise sum is finite and bounded by $\left\langle x,R_{0}x\right\rangle $.
By monotone convergence and Fubin-Tonelli, 
\[
\mathbb{E}_{\nu_{x}}\left[\sum\nolimits^{\infty}_{k=1}A_{k}(\omega)\right]=\sum\nolimits^{\infty}_{k=1}\mathbb{E}_{\nu_{x}}\left[A_{k}(\omega)\right].
\]
Taking expectations in the pathwise identity 
\[
\left\langle x,R_{0}x\right\rangle =\left\langle x,R_{\infty}(\omega)x\right\rangle +\sum\nolimits^{\infty}_{k=1}A_{k}(\omega)
\]
and using \prettyref{thm:d-2} to identify $\left\langle x,R_{\infty}(\omega)x\right\rangle $
with the almost sure limit $M_{\infty}(\omega)$ gives the desired
equality. 
\end{proof}
\begin{cor}
Set 
\[
E_{\mathrm{ext}}\left(x\right):=\left\{ \omega\in\Omega:\left\langle x,R_{\infty}\left(\omega\right)x\right\rangle =0\right\} .
\]
Then the following are equivalent:
\begin{enumerate}
\item $\nu_{x}\left(E_{\mathrm{ext}}(x)\right)=1$. 
\item $E_{\nu_{x}}\left[\left\langle x,R_{\infty}\left(\omega\right)x\right\rangle \right]=0$. 
\item $\sum_{k\ge1}E_{\nu_{x}}\left[A_{k}\left(\omega\right)\right]=\left\langle x,R_{0}x\right\rangle $. 
\end{enumerate}
\end{cor}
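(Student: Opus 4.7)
The corollary is essentially a bookkeeping consequence of \prettyref{prop:d-4} together with the nonnegativity of the pathwise boundary energy, so I would organize the proof as two short implications, (1)$\Leftrightarrow$(2) and (2)$\Leftrightarrow$(3), and chain them. The underlying random variable is $f(\omega) := \langle x, R_{\infty}(\omega) x\rangle$, which is nonnegative and $\nu_{x}$-integrable (bounded by $\langle x, R_{0}x\rangle$) thanks to \prettyref{prop:d-1} and \prettyref{thm:d-2}.

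For (1)$\Leftrightarrow$(2), I would invoke the standard measure-theoretic fact that a nonnegative integrable function has vanishing expectation if and only if it vanishes almost everywhere. Since $E_{\mathrm{ext}}(x) = \{f = 0\}$, the two conditions $\nu_{x}(\{f=0\}) = 1$ and $E_{\nu_{x}}[f] = 0$ are equivalent.

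For (2)$\Leftrightarrow$(3), I would simply substitute into the expected energy balance from \prettyref{prop:d-4}, namely
\[
\langle x, R_{0}x\rangle = E_{\nu_{x}}\bigl[\langle x, R_{\infty}(\omega)x\rangle\bigr] + \sum_{k\ge 1} E_{\nu_{x}}\bigl[A_{k}(\omega)\bigr].
\]
Since both terms on the right are nonnegative, one is zero precisely when the other equals $\langle x, R_{0}x\rangle$, giving the equivalence. Concatenating these two equivalences yields the corollary.

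There is no real obstacle here; the content of the corollary is a direct translation between pathwise extinction, vanishing expected boundary energy, and full dissipation on average. All the substantive work — the existence of $\nu_{x}$, the supermartingale identification of $M_{\infty}$ with $f$, the monotone-convergence exchange of sum and expectation — has already been done in \prettyref{thm:c-4}, \prettyref{thm:d-2}, and \prettyref{prop:d-4}. The only mild point worth flagging explicitly in the write-up is that $f \ge 0$ almost surely, which is immediate because each $R_{\infty}(\omega)$ is a positive operator.
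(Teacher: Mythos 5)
Your proposal is correct and follows essentially the same route as the paper: (1)$\Leftrightarrow$(2) via nonnegativity of $\langle x,R_{\infty}(\omega)x\rangle$, and (2)$\Leftrightarrow$(3) by reading off the expected energy balance of \prettyref{prop:d-4}. Nothing is missing for the three-item equivalence as stated.
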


Moreover, for every $n\ge0$ one has the tail identity 
\[
E_{\nu_{x}}\left[M_{n}\right]=E_{\nu_{x}}\left[\left\langle x,R_{\infty}\left(\omega\right)x\right\rangle \right]+\sum^{\infty}_{k=n+1}E_{\nu_{x}}\left[A_{k}\left(\omega\right)\right],
\]
and in particular 
\[
E_{\nu_{x}}\left[\left\langle x,R_{\infty}\left(\omega\right)x\right\rangle \right]=\inf_{n\ge0}E_{\nu_{x}}\left[M_{n}\right]=\lim_{n\to\infty}E_{\nu_{x}}\left[M_{n}\right].
\]

\begin{proof}
Since $\left\langle x,R_{\infty}\left(\omega\right)x\right\rangle \ge0$,
we have 
\[
\nu_{x}\left(E_{\mathrm{ext}}\left(x\right)\right)=1\Longleftrightarrow E_{\nu_{x}}\left[\left\langle x,R_{\infty}\left(\omega\right)x\right\rangle \right]=0.
\]
The equivalence of (2) and (3) is exactly \prettyref{prop:d-4}. 

For the tail identity, apply the pathwise telescoping identity at
depth $n$: 
\[
\left\langle x,R_{\omega|n}x\right\rangle =\left\langle x,R_{\infty}\left(\omega\right)x\right\rangle +\sum^{\infty}_{k=n+1}A_{k}\left(\omega\right),
\]
where the series converges monotonically, and take expectations, using
monotone convergence and Fubini-Tonelli as in \prettyref{prop:d-4}.
The final equalities follow because $E_{\nu_{x}}\left[M_{n}\right]$
is decreasing in $n$.
\end{proof}

\section{Uniform extinction}\label{sec:5}

In this section we show that, under a natural quantitative splitting
assumption on the projections $\{P_{j}\}$, the WR dynamics extinguish
the $x$-energy along $\nu_{x}$-almost every branch, and in fact
do so at an exponential rate in expectation. The result is dimension-free
and depends only on a uniform lower bound for the sum $\sum_{j}P_{j}$.

As before, we fix a vector $x\in H$ with $\left\langle x,R_{0}x\right\rangle >0$,
and we work with the WR tree $\{R_{w},D_{wj}\}$, the path measure
$\nu_{x}$, and the supermartingale 
\[
M_{n}(\omega):=\left\langle x,R_{\omega|n}x\right\rangle ,\qquad n\ge0,
\]
constructed in \prettyref{thm:d-2}. We also recall the step-$k$
dissipated operators 
\[
\Delta_{k}(\omega):=D_{\omega|k-1,j_{k}},\qquad A_{k}(\omega):=\left\langle x,\Delta_{k}(\omega)x\right\rangle ,\qquad k\ge1,
\]
where $j_{k}$ is the $k$-th letter of $\omega$. Recall that $ran(R_{w})$
are all contained in a fixed closed subspace $H_{0}\subset H$, where
$H_{0}=\overline{ran}(R^{1/2}_{0})$; see \eqref{eq:c-1}--\eqref{eq:c-2}.
\begin{assumption}
\label{assu:alpha} There exists a constant $\alpha>0$ such that
\begin{equation}
\sum^{m}_{j=1}P_{j}\ge\alpha I\quad\text{on }H_{0}.\label{eq:5-1}
\end{equation}
\end{assumption}

\begin{rem}
Note that \eqref{eq:5-1} holds with $\alpha=1$ if $\sum_{j}P_{j}=I$
on $H_{0}$, i.e., the splitting case \eqref{eq:c-3}. Since each
$P_{j}\le I$, we also have $\sum^{m}_{j=1}P_{j}\le mI$, so necessarily
$0<\alpha\le m$, and the quantity $c:=1-\frac{\alpha}{m}$ lies in
the interval $[0,1)$.

Equivalently, restricted to $H_{0}$, the subspaces $ran\left(P_{j}\right)$
with unit weights form a finite \textit{fusion frame} with bounds
$\alpha$ and $m$: 
\begin{equation}
\alpha\left\Vert x\right\Vert ^{2}\leq\sum^{m}_{j=1}\left\Vert P_{j}x\right\Vert ^{2}\leq m\left\Vert x\right\Vert ^{2},\qquad x\in H_{0}.\label{eq:f-2}
\end{equation}
We will nevertheless refer to \eqref{eq:5-1} as a leakage hypothesis,
since in the WR dynamics it measures the amount of residual energy
that is captured at each generation.

In particular, \eqref{eq:5-1} implies that $\overline{span}\left\{ ran\left(P_{j}\right)\right\} =H_{0}$.
If a direction in $H_{0}$ were orthogonal to every $ran\left(P_{j}\right)$,
then it would be invariant under the WR dynamics and extinction could
not hold in that direction. Thus some form of “no blind directions”
is necessary for the results of this section, and \eqref{eq:5-1}
is a quantitative version with a uniform margin $\alpha$.
\end{rem}

\prettyref{thm:5-3} shows that, under \eqref{eq:5-1}, the WR dynamics
must dissipate a fixed proportion of the $x$-energy at each level,
on average with respect to the path measure $\nu_{x}$. This leads
to uniform exponential decay of the expected residual energy, and
almost sure extinction along random branches. 
\begin{thm}
\label{thm:5-3} Assume \eqref{eq:5-1}. Then there exists a constant
\[
0\le c<1,\qquad c:=1-\frac{\alpha}{m},
\]
such that the following hold for every nonzero $x\in H$ with $\left\langle x,R_{0}x\right\rangle >0$:
\begin{enumerate}
\item For all $n\ge0$, 
\[
\mathbb{E}_{\nu_{x}}\left[M_{n+1}\right]\le c\,\mathbb{E}_{\nu_{x}}\left[M_{n}\right],
\]
and hence 
\[
\mathbb{E}_{\nu_{x}}\left[M_{n}\right]\le c^{n}\left\langle x,R_{0}x\right\rangle .
\]
\item Along $\nu_{x}$-almost every branch $\omega\in\Omega$, the residual
energy satisfies 
\[
\left\langle x,R_{\omega|n}x\right\rangle \longrightarrow0\quad\text{as }n\to\infty,
\]
that is, 
\[
\left\langle x,R_{\infty}(\omega)x\right\rangle =0\quad\text{for }\nu_{x}\text{-almost every }\omega.
\]
\end{enumerate}
In particular, the WR dynamics extinguish the $x$-energy almost surely
along a $\nu_{x}$-random branch, and the expected residual energy
decays at least at the rate $c^{n}$. 

\end{thm}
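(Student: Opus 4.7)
The plan is to establish a uniform one-step contraction
\[
\mathbb{E}_{\nu_x}[M_{n+1}\mid\mathcal{F}_n]\le c\, M_n\quad\nu_x\text{-a.s., with }c=1-\alpha/m,
\]
and then combine it with the $L^{1}$-convergence already supplied by \prettyref{thm:d-2}. Iterating the conditional inequality and taking expectations yields the geometric bound in (1). For (2), $L^{1}$-convergence gives
\[
\mathbb{E}_{\nu_x}\!\left[\langle x,R_\infty(\omega)x\rangle\right]=\lim_n \mathbb{E}_{\nu_x}[M_n]=0,
\]
and nonnegativity of $\langle x,R_\infty(\omega)x\rangle$ forces almost-sure extinction.

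To prove the conditional contraction, I would fix a word $w$ of length $n$ and work on the cylinder $[w]$, where $M_n\equiv\langle x,R_w x\rangle$. Since $R_{wj}=R_w-D_{wj}$, we have $\langle x,R_{wj}x\rangle = \langle x,R_w x\rangle - a_{wj}$, and hence
\[
\mathbb{E}_{\nu_x}[M_{n+1}\mid\mathcal{F}_n] = \langle x,R_w x\rangle - \sum_{j=1}^{m} p_x(j\mid w)\, a_{wj}\quad\text{on }[w].
\]
At an $x$-alive node, $p_x(j\mid w)=a_{wj}/S_w$ with $S_w:=\sum_k a_{wk}$, so the subtracted sum equals $\sum_j a_{wj}^2/S_w$. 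A Cauchy-Schwarz estimate yields $\sum_j a_{wj}^2 \ge S_w^2/m$, and the leakage hypothesis \eqref{eq:5-1}, applied to $R_w^{1/2}x$ (which lies in $H_0$ by the Douglas inclusion \eqref{eq:c-1}), gives
\[
S_w = \sum_{k=1}^{m}\|P_k R_w^{1/2}x\|^{2} = \left\langle R_w^{1/2}x, \left(\sum_{k=1}^{m}P_k\right) R_w^{1/2}x\right\rangle \ge \alpha\|R_w^{1/2}x\|^{2} = \alpha\langle x, R_w x\rangle.
\]
Chaining these bounds produces $\sum_j p_x(j\mid w)\, a_{wj} \ge (\alpha/m)\langle x, R_w x\rangle$, which is the desired contraction on alive cylinders.

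For $x$-dead nodes (where $S_w=0$), the same leakage inequality forces $\|R_w^{1/2}x\|^{2}=0$, so $M_n\equiv 0$ on $[w]$, and the contraction holds trivially since $M_{n+1}\le M_n$ pointwise along each branch. Taking expectations gives $\mathbb{E}_{\nu_x}[M_{n+1}]\le c\,\mathbb{E}_{\nu_x}[M_n]$, and induction from $\mathbb{E}_{\nu_x}[M_0]=\langle x,R_0 x\rangle$ delivers (1); the $L^{1}$ passage to the limit described above then delivers (2).

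The heart of the argument is the two-factor lower bound on the conditional dissipation at an alive node: the $1/m$ comes from Cauchy-Schwarz applied to the energy-biased transition probabilities, while the $\alpha$ comes from the leakage hypothesis, channeled through the invariance $R_w^{1/2}x\in H_0$. The subtlest point will be the handling of $x$-dead nodes, where the transition rule is prescribed extrinsically; the observation that \eqref{eq:5-1} itself forces $M_n$ to vanish on dead cylinders is what makes the arbitrary continuation rule irrelevant to the global contraction.
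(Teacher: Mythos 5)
Your proposal is correct and follows essentially the same route as the paper: the one-step conditional contraction on each cylinder via the identity $a_{wj}=\Vert P_{j}R_{w}^{1/2}x\Vert^{2}$, Cauchy--Schwarz to get $\sum_{j}a_{wj}^{2}\ge S_{w}^{2}/m$, the leakage bound $S_{w}\ge\alpha\langle x,R_{w}x\rangle$ applied to $R_{w}^{1/2}x\in H_{0}$, the observation that \eqref{eq:5-1} kills the $x$-dead case, and the $L^{1}$ passage through \prettyref{thm:d-2}. No gaps.
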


\begin{proof}
We first derive a uniform one-step contraction bound for the conditional
expectation of $M_{n+1}$ given $\mathcal{F}_{n}$. Fix $n\ge0$ and
a word $w$ of length $n$. On the cylinder $[w]$ we have $\omega|n=w$
and $M_{n}(\omega)=\left\langle x,R_{w}x\right\rangle $. Set 
\[
v:=R^{1/2}_{w}x\in H_{0}.
\]
Then $\left\langle x,R_{w}x\right\rangle =\left\Vert v\right\Vert ^{2}$.
For each child $wj$, the next residual is 
\[
R_{wj}=R^{1/2}_{w}(I-P_{j})R^{1/2}_{w},
\]
so 
\[
\left\langle x,R_{wj}x\right\rangle =\left\langle v,(I-P_{j})v\right\rangle =\left\Vert v\right\Vert ^{2}-\left\Vert P_{j}v\right\Vert ^{2}.
\]
Next, recall that the energy-biased transition probabilities at node
$w$ are given by 
\[
p_{x}(j\mid w):=\frac{a_{wj}}{\sum_{k}a_{wk}}=\frac{\left\langle x,D_{wj}x\right\rangle }{\sum_{k}\left\langle x,D_{wk}x\right\rangle }=\frac{\left\Vert P_{j}v\right\Vert ^{2}}{\sum_{k}\left\Vert P_{k}v\right\Vert ^{2}},
\]
whenever $\sum_{k}a_{wk}>0$, and by our convention they are given
by some fixed probability vector when $\sum_{k}a_{wk}=0$. In either
case, $p_{x}(\cdot\mid w)$ is a probability vector on $\mathcal{A}$.

Denote 
\[
s_{j}:=\left\Vert P_{j}v\right\Vert ^{2},\qquad S:=\sum^{m}_{j=1}s_{j}.
\]
Then 
\[
p_{x}(j\mid w)=\begin{cases}
{\displaystyle \frac{s_{j}}{S},} & \text{if }S>0,\\
\text{a fixed }q_{j}, & \text{if }S=0.
\end{cases}
\]
If $S=0$, then $P_{j}v=0$ for all $j$, and hence 
\[
0=\sum\nolimits^{m}_{j=1}\left\Vert P_{j}v\right\Vert ^{2}=\left\langle v,\left(\sum\nolimits^{m}_{j=1}P_{j}\right)v\right\rangle \ge\alpha\left\Vert v\right\Vert ^{2}
\]
by \eqref{eq:5-1}. Since $\alpha>0$, this forces $v=0$, so $\left\Vert v\right\Vert ^{2}=0$
and therefore $M_{n}(\omega)=0$ on $[w]$. In particular, 
\[
\mathbb{E}_{\nu_{x}}\left[M_{n+1}\mid\mathcal{F}_{n}\right]=0=c\,M_{n}(\omega)
\]
for all $\omega\in[w]$, and the desired inequality holds trivially
on that cylinder.

If $S>0$, then 
\[
\mathbb{E}_{\nu_{x}}\left[M_{n+1}\mid\mathcal{F}_{n}\right]=\sum\nolimits^{m}_{j=1}p_{x}(j\mid w)\left\langle x,R_{wj}x\right\rangle =\sum\nolimits^{m}_{j=1}\frac{s_{j}}{S}\left(\left\Vert v\right\Vert ^{2}-s_{j}\right).
\]
Dividing and rearranging, we obtain 
\[
\frac{\mathbb{E}_{\nu_{x}}\left[M_{n+1}\mid\mathcal{F}_{n}\right]}{\left\Vert v\right\Vert ^{2}}=\sum\nolimits^{m}_{j=1}\frac{s_{j}}{S}\left(1-\frac{s_{j}}{\left\Vert v\right\Vert ^{2}}\right)=1-\frac{\sum^{m}_{j=1}s^{2}_{j}}{S\left\Vert v\right\Vert ^{2}}.
\]
By the Cauchy--Schwarz inequality, 
\[
\left(\sum\nolimits^{m}_{j=1}s_{j}\right)^{2}\le m\sum\nolimits^{m}_{j=1}s^{2}_{j},
\]
so 
\[
\sum\nolimits^{m}_{j=1}s^{2}_{j}\ge\frac{S^{2}}{m}.
\]
Therefore, 
\[
\frac{\mathbb{E}_{\nu_{x}}\left[M_{n+1}\mid\mathcal{F}_{n}\right]}{\left\Vert v\right\Vert ^{2}}\le1-\frac{S^{2}/m}{S\left\Vert v\right\Vert ^{2}}=1-\frac{S}{m\left\Vert v\right\Vert ^{2}}.
\]
On the other hand, \eqref{eq:5-1} applied to $v\in H_{0}$ yields
\[
S=\sum\nolimits^{m}_{j=1}\left\Vert P_{j}v\right\Vert ^{2}=\left\langle v,\left(\sum\nolimits^{m}_{j=1}P_{j}\right)v\right\rangle \ge\alpha\left\Vert v\right\Vert ^{2}.
\]
Combining the inequalities, we obtain 
\[
\frac{\mathbb{E}_{\nu_{x}}\left[M_{n+1}\mid\mathcal{F}_{n}\right]}{\left\Vert v\right\Vert ^{2}}\le1-\frac{\alpha}{m},
\]
that is, 
\[
\mathbb{E}_{\nu_{x}}\left[M_{n+1}\mid\mathcal{F}_{n}\right]\le\left(1-\frac{\alpha}{m}\right)\left\Vert v\right\Vert ^{2}=\left(1-\frac{\alpha}{m}\right)M_{n}(\omega).
\]
Summarizing, we have shown that for every $n$ and every cylinder
$[w]$ of length $n$, 
\[
\mathbb{E}_{\nu_{x}}\left[M_{n+1}\mid\mathcal{F}_{n}\right]\le c\,M_{n}\quad\text{almost surely,}
\]
with $c=1-\alpha/m\in[0,1)$.

Taking expectations in this inequality yields 
\[
\mathbb{E}_{\nu_{x}}\left[M_{n+1}\right]\le c\,\mathbb{E}_{\nu_{x}}\left[M_{n}\right]\quad\text{for all }n\ge0.
\]
Iterating gives 
\[
\mathbb{E}_{\nu_{x}}\left[M_{n}\right]\le c^{n}\,\mathbb{E}_{\nu_{x}}\left[M_{0}\right]=c^{n}\left\langle x,R_{0}x\right\rangle ,
\]
which proves part (1).

For part (2), recall that $\{M_{n}\}$ is a bounded nonnegative supermartingale.
By \prettyref{thm:d-2}, there exists an integrable random variable
$M_{\infty}$ such that 
\[
M_{n}\longrightarrow M_{\infty}\quad\text{almost surely and in }L^{1}(\nu_{x}),
\]
and for $\nu_{x}$-almost every $\omega$, 
\[
M_{\infty}(\omega)=\left\langle x,R_{\infty}(\omega)x\right\rangle .
\]
On the other hand, from part (1) we have 
\[
\mathbb{E}_{\nu_{x}}\left[M_{n}\right]\le c^{n}\left\langle x,R_{0}x\right\rangle \longrightarrow0.
\]
Since $M_{n}\to M_{\infty}$ in $L^{1}(\nu_{x})$, this forces 
\[
\mathbb{E}_{\nu_{x}}\left[M_{\infty}\right]=0.
\]
Because $M_{\infty}\ge0$, we conclude that $M_{\infty}(\omega)=0$
for $\nu_{x}$-almost every $\omega$. Equivalently, 
\[
\left\langle x,R_{\infty}(\omega)x\right\rangle =0\quad\text{for }\nu_{x}\text{-almost every }\omega.
\]
\end{proof}
\begin{cor}
The constant $c=1-\alpha/m$ is dimension-free and depends only on
the parameter $\alpha$ and the branching number $m$. In the splitting
case, $\sum_{j}P_{j}=I$ on $H_{0}$, one has $\alpha=1$ and therefore
\[
\mathbb{E}_{\nu_{x}}\left[M_{n}\right]\le\left(1-\frac{1}{m}\right)^{n}\left\langle x,R_{0}x\right\rangle .
\]
Thus the expected residual energy decays at least at rate $(1-1/m)^{n}$
for every initial state $x$, and almost sure extinction holds along
$\nu_{x}$-random branches.
\end{cor}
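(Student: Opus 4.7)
The corollary is a direct specialization of \prettyref{thm:5-3} to the case $\alpha=1$, together with a structural reading of the contraction constant produced by that theorem. I would organize the argument in two short observations, neither of which requires any new estimation.

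For the dimension-freeness assertion, I plan to simply inspect the constant $c = 1 - \alpha/m$ appearing in \prettyref{thm:5-3}. It involves only the leakage parameter $\alpha > 0$ from Assumption~\ref{assu:alpha} and the branching number $m$, and the proof of \prettyref{thm:5-3} produces these two quantities through exactly two inputs: the elementary Cauchy--Schwarz lower bound $\sum_j s_j^2 \ge S^2/m$ (contributing the factor $1/m$) and the operator inequality \eqref{eq:5-1} applied to the residual vector $v = R_w^{1/2} x \in H_0$ (contributing the factor $\alpha$). Nothing in that argument references $\dim H$, the spectral data of individual residuals $R_w$, or any ambient geometric constant beyond what is already absorbed into $\alpha$, so dimension-freeness is inherited directly.

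For the splitting case, I would verify that the hypothesis $\sum_{j=1}^m P_j = I$ on $H_0$ from \eqref{eq:c-3} is exactly the equality case of Assumption~\ref{assu:alpha} with $\alpha = 1$: the required operator inequality $\sum_j P_j \ge I$ on $H_0$ is trivial here. Substituting $\alpha = 1$ into \prettyref{thm:5-3}(1) yields the quantitative decay bound $\mathbb{E}_{\nu_x}[M_n] \le (1 - 1/m)^n \langle x, R_0 x\rangle$, and applying \prettyref{thm:5-3}(2) with the same $\alpha = 1$ gives almost sure extinction along $\nu_x$-random branches.

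There is no genuine technical obstacle: the corollary is a packaging of \prettyref{thm:5-3} in the most natural geometric special case. The content worth emphasizing in the write-up is conceptual rather than computational, namely that once the $P_j$ form an exact resolution of the identity on the evolving support $H_0$, the extinction rate is universal in the branching number $m$ alone, independent of the Hilbert space, of the initial operator $R_0$, and of the particular projections beyond their summing to $I$.
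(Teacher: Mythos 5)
Your proposal is correct and matches the paper's treatment: the paper gives no separate proof of this corollary, since it is exactly the specialization of \prettyref{thm:5-3} to $\alpha=1$ (the splitting hypothesis \eqref{eq:c-3} trivially implies \eqref{eq:5-1} with $\alpha=1$), together with the observation that $c=1-\alpha/m$ involves no dimensional quantities. Your two observations are precisely what is needed, and no genuinely different route is taken.
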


\begin{rem}
Assumption \eqref{eq:5-1} can be understood as a quantitative nondegeneracy
of the projections $\{P_{j}\}$ on the WR-orbit of $x$. For each
residual vector $v=R^{1/2}_{w}x$, the inequality 
\[
\sum_{j}\left\Vert P_{j}v\right\Vert ^{2}\ge\alpha\left\Vert v\right\Vert ^{2}
\]
says that, on average, the family $\{P_{j}\}$ captures a fixed fraction
of the energy of $v$. The theorem then asserts that this uniform
leakage propagates along the WR tree and forces extinction of the
$x$-energy at the boundary, with a quantitative rate controlled solely
by $\alpha$ and $m$. Thus, an operator-theoretic splitting condition
translates into a probabilistic extinction for the WR dynamics. 
\end{rem}

\section{Random atomic expansions along a branch}\label{sec:6}

In this section we show that, under the leakage assumption \eqref{eq:5-1},
the WR dynamics generate a random atomic (frame-like) expansion of
the quadratic form of $R_{0}$ along typical branches. For each fixed
state $x\in H$ with $\left\langle x,R_{0}x\right\rangle >0$, the
stepwise dissipated operators $\Delta_{k}(\omega)$ can be decomposed
spectrally, and the resulting rank-one pieces combine to give a pathwise
Parseval-type identity 
\[
\left\langle x,R_{0}x\right\rangle =\sum_{k,r}\left|\left\langle x,\varphi_{k,r}\left(\omega\right)\right\rangle \right|^{2}
\]
for $\nu_{x}$-almost every branch $\omega$. This gives an operator-theoretic
instance of a ``WR frame on the boundary'', tied to a single initial
state $x$.

We continue with the setting of Sections \ref{sec:4} and \ref{sec:5}.
In particular: 
\begin{itemize}
\item $R_{w}$ and $D_{wj}$ are the WR residuals and dissipated pieces
at node $w$. 
\item For $\omega\in\Omega$ and $k\ge1$, the step-$k$ dissipated operator
along $\omega$ is 
\[
\Delta_{k}(\omega):=D_{\omega|k-1,j_{k}},
\]
where $j_{k}$ is the $k$-th letter of $\omega$. 
\item For a fixed state $x\in H$, the scalar dissipated energy at step
$k$ is 
\[
A_{k}(\omega):=\left\langle x,\Delta_{k}(\omega)x\right\rangle .
\]
\end{itemize}
The pathwise telescoping identity reads 
\[
\left\langle x,R_{0}x\right\rangle =\left\langle x,R_{\infty}(\omega)x\right\rangle +\sum^{\infty}_{k=1}A_{k}(\omega),
\]
with monotone convergence in $k$. Under \eqref{eq:5-1}, we have
\[
\left\langle x,R_{\infty}(\omega)x\right\rangle =0\quad\text{for }\nu_{x}\text{-almost every }\omega
\]
by \prettyref{thm:5-3}. 

For this section we impose an additional condition:
\begin{assumption}
\label{assump:trace} The initial operator $R_{0}$ is trace class. 
\end{assumption}

Since $0\le R_{w}\le R_{0}$ for every $w$, this implies that each
$R_{w}$ is trace class. In particular, for every $\omega\in\Omega$
and $k\ge1$, 
\[
\Delta_{k}(\omega)=R^{1/2}_{\omega|k-1}P_{j_{k}}R^{1/2}_{\omega|k-1}
\]
is trace class. We now pass from operator-valued dissipation $\Delta_{k}(\omega)$
to rank-one atoms.

\subsection{Spectral atoms of the dissipated operators}

Fix a branch $\omega\in\Omega$ and a step $k\ge1$. By Assumption
\ref{assump:trace}, the operator $\Delta_{k}(\omega)$ is positive
and compact. By the spectral theorem for compact selfadjoint operators,
there exists an orthonormal family $\{u_{k,r}(\omega)\}_{r\in\mathcal{R}_{k}(\omega)}\subset H$
and nonnegative eigenvalues $\{\lambda_{k,r}(\omega)\}_{r\in\mathcal{R}_{k}(\omega)}$
such that 
\[
\Delta_{k}(\omega)=\sum_{r\in\mathcal{R}_{k}(\omega)}\lambda_{k,r}(\omega)\left|u_{k,r}(\omega)\right\rangle \left\langle u_{k,r}(\omega)\right|
\]
with convergence in the strong operator topology. Here $\mathcal{R}_{k}(\omega)$
is at most countable, assuming $H$ separable. We define the corresponding
rank-one ``atoms'' 
\[
\varphi_{k,r}(\omega):=\lambda_{k,r}(\omega)^{1/2}u_{k,r}(\omega),\qquad k\ge1,\ r\in\mathcal{R}_{k}(\omega).
\]
Then 
\[
\Delta_{k}(\omega)=\sum_{r\in\mathcal{R}_{k}(\omega)}\left|\varphi_{k,r}(\omega)\right\rangle \left\langle \varphi_{k,r}(\omega)\right|.
\]
For any $x\in H$ we therefore have 
\[
A_{k}(\omega)=\left\langle x,\Delta_{k}(\omega)x\right\rangle =\sum_{r\in\mathcal{R}_{k}(\omega)}\left|\left\langle x,\varphi_{k,r}(\omega)\right\rangle \right|^{2},
\]
with the scalar series converging monotonically in $r$.

\subsection{A random WR Parseval expansion for a fixed state}

We now combine the spectral decomposition of $\Delta_{k}(\omega)$
with the extinction result of \prettyref{sec:5} to obtain a pathwise
expansion of $\left\langle x,R_{0}x\right\rangle $ in terms of the
atoms $\varphi_{k,r}(\omega)$.
\begin{thm}
\label{thm:6-2} Assume \eqref{eq:5-1} and $R_{0}$ is trace class,
and fix a nonzero state $x\in H$ with $\left\langle x,R_{0}x\right\rangle >0$.
Then there exists a $\nu_{x}$-full measure set $\Omega_{x}\subset\Omega$
such that for every $\omega\in\Omega_{x}$, 
\[
\left\langle x,R_{0}x\right\rangle =\sum^{\infty}_{k=1}A_{k}(\omega)=\sum^{\infty}_{k=1}\sum_{r\in\mathcal{R}_{k}(\omega)}\left|\left\langle x,\varphi_{k,r}(\omega)\right\rangle \right|^{2},
\]
where the double series converges monotonically. In particular, for
$\nu_{x}$-almost every branch $\omega$, the family 
\[
\left\{ \varphi_{k,r}(\omega):k\ge1,\ r\in\mathcal{R}_{k}(\omega)\right\} 
\]
yields a Parseval-type atomic decomposition of the scalar quantity
$\left\langle x,R_{0}x\right\rangle $. 
\end{thm}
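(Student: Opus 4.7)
The plan is to combine the pathwise telescoping identity from \prettyref{thm:B-1}(4) with the uniform extinction result \prettyref{thm:5-3}, and then expand each step-$k$ dissipated operator by its spectral theorem. All randomness is ultimately absorbed into a single $\nu_{x}$-full measure set coming from the extinction step, because the spectral decomposition of a positive compact operator is deterministic (pointwise in $\omega$).

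First, I would apply \prettyref{thm:B-1}(4), which gives, for \emph{every} $\omega\in\Omega$,
\[
\left\langle x,R_{0}x\right\rangle \;=\;\left\langle x,R_{\infty}(\omega)x\right\rangle +\sum^{\infty}_{k=1}A_{k}(\omega),
\]
with the nonnegative scalar series converging monotonically. Under the leakage hypothesis \eqref{eq:5-1}, \prettyref{thm:5-3}(2) produces a $\nu_{x}$-full measure set $\Omega_{x}\subseteq\Omega$ on which $\left\langle x,R_{\infty}(\omega)x\right\rangle =0$. Restricted to $\Omega_{x}$, the identity collapses to the first equality in the statement, $\left\langle x,R_{0}x\right\rangle =\sum_{k\ge1}A_{k}(\omega)$.

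For the second equality, I would invoke \prettyref{assump:trace}. Since $R_{0}$ is trace class and $0\le R_{w}\le R_{0}$ in the Loewner order for every word $w$, each $R_{w}$ inherits the trace-class property, hence so does $\Delta_{k}(\omega)=R^{1/2}_{\omega|k-1}P_{j_{k}}R^{1/2}_{\omega|k-1}$. Being positive, self-adjoint, and compact, $\Delta_{k}(\omega)$ admits the spectral expansion recorded just before the theorem statement, and the vectors $\varphi_{k,r}(\omega)=\lambda_{k,r}(\omega)^{1/2}u_{k,r}(\omega)$ satisfy the pointwise identity
\[
A_{k}(\omega)=\left\langle x,\Delta_{k}(\omega)x\right\rangle =\sum_{r\in\mathcal{R}_{k}(\omega)}\left|\left\langle x,\varphi_{k,r}(\omega)\right\rangle \right|^{2}
\]
for every $\omega\in\Omega$. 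Summing over $k$ and combining with the previous step yields the double-series equality on $\Omega_{x}$; monotone convergence (equivalently, Tonelli for counting measures on $\mathbb{N}\times\mathcal{R}_{k}$) justifies the iterated summation.

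The main obstacle is conceptual rather than analytic: one must observe that no measurable selection of the spectral atoms $\varphi_{k,r}(\omega)$ in $\omega$ is required, because the statement is phrased as a pointwise identity valid on a full-measure set rather than as an operator identity in some joint measurable sense. The only genuinely probabilistic input is the almost-sure vanishing of $\left\langle x,R_{\infty}(\omega)x\right\rangle $, established in \prettyref{sec:5}; everything else is the deterministic spectral theorem applied along a fixed branch, together with the nonnegativity that makes every rearrangement of the double sum legitimate.
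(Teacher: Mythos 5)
Your proposal is correct and follows essentially the same route as the paper's own proof: the pathwise telescoping identity from \prettyref{thm:B-1}, the extinction set $\Omega_{x}$ from \prettyref{thm:5-3}, the spectral expansion of each compact positive $\Delta_{k}(\omega)$, and monotone convergence of the nonnegative double series. Your added observation that no measurable selection of the atoms is needed is accurate but does not alter the argument.
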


\begin{proof}
Fix $x\in H$ with $\left\langle x,R_{0}x\right\rangle >0$. By the
pathwise telescoping identity and the definition of $A_{k}$, for
every $\omega\in\Omega$, 
\[
\left\langle x,R_{0}x\right\rangle =\left\langle x,R_{\infty}(\omega)x\right\rangle +\sum^{\infty}_{k=1}A_{k}(\omega),
\]
with $\sum_{k\ge1}A_{k}(\omega)$ converging monotonically and bounded
above by $\left\langle x,R_{0}x\right\rangle $. By \prettyref{thm:5-3}
(under \eqref{eq:5-1}) we know that 
\[
\left\langle x,R_{\infty}(\omega)x\right\rangle =0\quad\text{for }\nu_{x}\text{-almost every }\omega.
\]
Let $\Omega_{x}\subset\Omega$ denote the full-measure subset on which
this holds. For each $\omega\in\Omega_{x}$ the telescoping identity
reduces to 
\[
\left\langle x,R_{0}x\right\rangle =\sum^{\infty}_{k=1}A_{k}(\omega),
\]
with monotone convergence in $k$. Next, for each $\omega$ and $k$
we expand $A_{k}(\omega)$ using the spectral decomposition of $\Delta_{k}(\omega)$:
\[
A_{k}(\omega)=\left\langle x,\Delta_{k}(\omega)x\right\rangle =\sum_{r\in\mathcal{R}_{k}(\omega)}\left|\left\langle x,\varphi_{k,r}(\omega)\right\rangle \right|^{2},
\]
with monotone convergence in $r$. Combining these relations, we obtain
\[
\left\langle x,R_{0}x\right\rangle =\sum^{\infty}_{k=1}A_{k}(\omega)=\sum^{\infty}_{k=1}\sum_{r\in\mathcal{R}_{k}(\omega)}\left|\left\langle x,\varphi_{k,r}(\omega)\right\rangle \right|^{2}
\]
for all $\omega\in\Omega_{x}$. Since all terms are nonnegative, the
double series converges monotonically to $\left\langle x,R_{0}x\right\rangle $,
and the order of summation may be rearranged arbitrarily if desired. 
\end{proof}
\begin{rem}
For a fixed $x$, \prettyref{thm:6-2} shows that along $\nu_{x}$-almost
every branch $\omega$ the entire $x$-energy $\left\langle x,R_{0}x\right\rangle $
can be written as an $\ell^{2}$-type sum of coefficients 
\[
\left\langle x,\varphi_{k,r}(\omega)\right\rangle 
\]
against a random countable family of atoms $\{\varphi_{k,r}(\omega)\}$
generated by the WR dynamics. In this sense, each typical WR branch
carries a ``Parseval frame'' for the one-dimensional ray $\mathbb{C}x$,
with frame elements living entirely inside the cone $B(H)_{+}$ through
their rank-one outer products. The indexing set 
\[
\left\{ (k,r):k\ge1,\ r\in\mathcal{R}_{k}(\omega)\right\} 
\]
inherits the tree-like, branching structure of the WR energy tree,
and in concrete finite-dimensional examples this random family exhibits
a visibly fractal organization in $B(H)_{+}$. 
\end{rem}

\begin{rem}
One can extend this picture to a finite or countable family of initial
states $\{x_{\alpha}\}\subset H$. For each $x_{\alpha}$ one constructs
the corresponding path measure $\nu_{x_{\alpha}}$ and obtains a full-measure
set $\Omega_{x_{\alpha}}$ on which 
\[
\left\langle x_{\alpha},R_{0}x_{\alpha}\right\rangle =\sum_{k,r}\left|\left\langle x_{\alpha},\varphi_{k,r}(\omega)\right\rangle \right|^{2}.
\]
The operators $\Delta_{k}(\omega)$ and their spectral atoms $\varphi_{k,r}(\omega)$
are common to all $\alpha$; only the sampling measure $\nu_{x_{\alpha}}$
on $\Omega$ changes with the initial state. Thus the WR tree defines
a single family of atoms in $H$, while different initial states probe
this family through different energy-biased random walks on the tree.
\end{rem}

\section{Parseval-type frame via trace-biased branching}\label{sec:7}

In this section we construct a second path measure, now trace-biased
rather than $x$-energy biased, and prove that under \eqref{eq:5-1}
and Assumption \ref{assump:trace}, the WR dynamics along almost every
branch (with respect to this new measure) generate a countable atom
system whose rank-one outer products sum strongly to $R_{0}$. Equivalently,
along almost every branch we obtain a Parseval-type frame identity
with frame operator $R_{0}$ on the energy space $\overline{ran}(R^{1/2}_{0})$.

Throughout this section we assume \eqref{eq:5-1} and that $R_{0}$
is  trace class.

\subsection{Trace-biased transition probabilities and path measure}

For each node $w\in\mathcal{W}$ define the trace-dissipated weights
\[
s^{\mathrm{tr}}_{wj}:=\mathrm{tr}(D_{wj})=\mathrm{tr}\left(R^{1/2}_{w}P_{j}R^{1/2}_{w}\right)\ge0,\qquad j\in\mathcal{A}.
\]
Set 
\[
S^{\mathrm{tr}}_{w}:=\sum^{m}_{j=1}s^{\mathrm{tr}}_{wj}=\mathrm{tr}\left(R^{1/2}_{w}\left(\sum\nolimits^{m}_{j=1}P_{j}\right)R^{1/2}_{w}\right).
\]

We say that $w$ is trace-alive if $S^{\mathrm{tr}}_{w}>0$ and trace-dead
otherwise. On trace-alive nodes we define 
\[
p^{\mathrm{tr}}(j\mid w):=\frac{s^{\mathrm{tr}}_{wj}}{\sum^{m}_{k=1}s^{\mathrm{tr}}_{wk}},\qquad j\in\mathcal{A}.
\]
On trace-dead nodes we fix once and for all a reference probability
vector 
\[
q=(q_{1},\dots,q_{m})\in(0,1)^{m},\qquad\sum^{m}_{j=1}q_{j}=1,
\]
and set 
\[
p^{\mathrm{tr}}(j\mid w):=q_{j},\qquad j\in\mathcal{A}
\]
whenever $S^{\mathrm{tr}}_{w}=0$. Thus in all cases $p^{\mathrm{tr}}(\cdot\mid w)$
is a probability distribution on $\mathcal{A}$.

For a finite word $w=j_{1}\cdots j_{n}$ we define the cylinder weight
\[
\nu^{\mathrm{tr}}\left([w]\right):=p^{\mathrm{tr}}(j_{1}\mid\emptyset)p^{\mathrm{tr}}(j_{2}\mid j_{1})\cdots p^{\mathrm{tr}}(j_{n}\mid j_{1}\cdots j_{n-1}).
\]
As in \prettyref{sec:3}, this gives a consistent family of weights
on cylinders: 
\[
\nu^{\mathrm{tr}}\left([w]\right)=\sum^{m}_{j=1}\nu^{\mathrm{tr}}\left([wj]\right),\qquad w\in\mathcal{W},
\]
and the Kolmogorov--Caratheodory extension theorem yields a unique
Borel probability measure $\nu^{\mathrm{tr}}$ on $\Omega=\mathcal{A}^{\mathbb{N}}$
that extends these cylinder weights.

We call $\nu^{\mathrm{tr}}$ the trace-biased WR path measure. Intuitively,
at each trace-alive node $w$ the next index $j$ is chosen with probability
proportional to the trace $\mathrm{tr}(D_{wj})$ dissipated by channel
$j$; when no trace is dissipated the branching is governed by the
fixed reference vector $q$.

\subsection{A trace supermartingale and extinction of the trace}

We now construct the trace analogue of the energy supermartingale
from \prettyref{sec:4}.

For $\omega\in\Omega$ and $n\ge0$ set 
\[
T_{n}(\omega):=\mathrm{tr}(R_{\omega|n}).
\]
Since $R_{\omega|n}$ is positive and trace class for all $\omega$
and $n$, $T_{n}(\omega)\ge0$, and by $\mathrm{tr}(R_{\omega|n})\le\mathrm{tr}(R_{0})$
we have 
\[
0\le T_{n}(\omega)\le\mathrm{tr}(R_{0}),\qquad\omega\in\Omega,\ n\ge0.
\]
Let $\{\mathcal{F}_{n}\}$ be the canonical filtration generated by
the prefixes $\omega|n$.
\begin{thm}
\label{thm:7-1} Assume \eqref{eq:5-1} and that $R_{0}$ is trace
class. Then there exists a constant 
\[
0\le c<1,\qquad c:=1-\frac{\alpha}{m},
\]
such that the following hold:
\begin{enumerate}
\item $\{T_{n}\}_{n\ge0}$ is a bounded nonnegative supermartingale on $(\Omega,\nu^{\mathrm{tr}},\mathcal{F}_{n})$,
and for all $n\ge0$, 
\[
\mathbb{E}_{\nu^{\mathrm{tr}}}\left[T_{n+1}\right]\le c\,\mathbb{E}_{\nu^{\mathrm{tr}}}\left[T_{n}\right].
\]
\item For all $n\ge0$, 
\[
\mathbb{E}_{\nu^{\mathrm{tr}}}\left[T_{n}\right]\le c^{n}\mathrm{tr}(R_{0}),
\]
and the limit 
\[
T_{\infty}(\omega):=\lim_{n\to\infty}T_{n}(\omega)=\mathrm{tr}(R_{\infty}(\omega))
\]
exists for $\nu^{\mathrm{tr}}$-almost every $\omega$.
\item For $\nu^{\mathrm{tr}}$-almost every $\omega$, 
\[
\mathrm{tr}(R_{\infty}(\omega))=0,
\]
and therefore $R_{\infty}(\omega)=0$ as an operator. 
\end{enumerate}
\end{thm}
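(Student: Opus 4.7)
The plan is to proceed in close analogy with the proof of \prettyref{thm:5-3}, replacing the $x$-energy supermartingale $M_n$ by the trace supermartingale $T_n$ and the energy-biased transitions $p_{x}$ by the trace-biased transitions $p^{\mathrm{tr}}$. The key observation is that the trace plays the role of a universal ``test vector'': the identity $\mathrm{tr}(R_w)-\mathrm{tr}(R_{wj})=\mathrm{tr}(D_{wj})=s^{\mathrm{tr}}_{wj}$ exactly parallels the scalar dissipation identity used before, and the weights $s^{\mathrm{tr}}_{wj}$ are precisely what drives the new path measure, so the computation of the one-step contraction goes through almost verbatim.

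For part (1), I would fix $n$ and a word $w$ of length $n$. On $[w]$ one has $T_n=\mathrm{tr}(R_w)$, and
\[
\mathbb{E}_{\nu^{\mathrm{tr}}}[T_{n+1}\mid\mathcal{F}_n]=\sum_{j=1}^{m}p^{\mathrm{tr}}(j\mid w)\,\mathrm{tr}(R_{wj})=\mathrm{tr}(R_w)-\sum_{j=1}^{m}p^{\mathrm{tr}}(j\mid w)\,s^{\mathrm{tr}}_{wj}.
\]
On trace-alive nodes, substituting $p^{\mathrm{tr}}(j\mid w)=s^{\mathrm{tr}}_{wj}/S^{\mathrm{tr}}_{w}$ and applying Cauchy--Schwarz in the form $\sum_j (s^{\mathrm{tr}}_{wj})^2\ge (S^{\mathrm{tr}}_{w})^2/m$ gives $\mathbb{E}[T_{n+1}\mid\mathcal{F}_n]\le \mathrm{tr}(R_w)-S^{\mathrm{tr}}_{w}/m$. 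The leakage hypothesis \eqref{eq:5-1}, combined with $\mathrm{ran}(R_w^{1/2})\subset H_0$, yields $S^{\mathrm{tr}}_{w}=\mathrm{tr}(R_w^{1/2}(\sum_j P_j)R_w^{1/2})\ge \alpha\,\mathrm{tr}(R_w)$, whence the contraction $\mathbb{E}[T_{n+1}\mid\mathcal{F}_n]\le (1-\alpha/m)\,T_n$ on the alive cylinder. On trace-dead cylinders the same leakage inequality forces $\mathrm{tr}(R_w)=0$ (hence $R_w=0$), so the inequality holds trivially. Taking expectations and iterating gives the geometric bound $\mathbb{E}[T_n]\le c^n\,\mathrm{tr}(R_0)$ of part (2).

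For the almost-sure limit, I would invoke Doob's convergence theorem on the bounded nonnegative supermartingale $\{T_n\}$ to produce an integrable $T_\infty$. Separately, for every fixed $\omega$ the residuals satisfy $R_{\omega|n}\downarrow R_\infty(\omega)$ strongly by \prettyref{thm:B-1}; since $R_0-R_{\omega|n}\uparrow R_0-R_\infty(\omega)$ with each term positive and dominated by the trace-class operator $R_0$, normality of the trace (applied term-by-term in any orthonormal basis, with dominated convergence against $\sum_k\langle e_k,R_0 e_k\rangle<\infty$) yields $\mathrm{tr}(R_{\omega|n})\downarrow\mathrm{tr}(R_\infty(\omega))$ pointwise. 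Uniqueness of limits then forces $T_\infty(\omega)=\mathrm{tr}(R_\infty(\omega))$ a.s. Finally, from $\mathbb{E}[T_n]\to 0$ together with uniform boundedness and $L^1$-convergence one concludes $\mathbb{E}[T_\infty]=0$, so $\mathrm{tr}(R_\infty(\omega))=0$ $\nu^{\mathrm{tr}}$-a.s., and faithfulness of the trace on the positive cone upgrades this to $R_\infty(\omega)=0$ as an operator a.s., completing part (3).

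The main obstacle, modest as it is, lies in the interchange of the trace with the strong-operator limit along each branch: $\mathrm{tr}$ is not SOT-continuous in general, but the combination of Loewner monotonicity and domination by the trace-class $R_0$ makes the passage legitimate via the normality of the trace. Every other step is a direct translation of the argument in \prettyref{thm:5-3}, with the $x$-dependent quantities systematically replaced by their trace analogues.
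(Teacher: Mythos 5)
Your proposal is correct and follows essentially the same route as the paper's proof: the same one-step contraction via the substitution $p^{\mathrm{tr}}(j\mid w)=s^{\mathrm{tr}}_{wj}/S^{\mathrm{tr}}_{w}$, Cauchy--Schwarz, and the leakage bound $S^{\mathrm{tr}}_{w}\ge\alpha\,\mathrm{tr}(R_{w})$, the same treatment of trace-dead nodes (where $\mathrm{tr}(R_{w})=0$ forces $R_{w}=0$), and the same supermartingale-convergence and faithfulness-of-the-trace endgame. Your dominated-convergence justification of $\mathrm{tr}(R_{\omega|n})\downarrow\mathrm{tr}(R_{\infty}(\omega))$ is a slightly more explicit version of the paper's appeal to monotone convergence (normality) of the trace, but it is the same step.
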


\begin{proof}
We first show the one-step contraction inequality for the conditional
expectation.

Fix $n\ge0$ and a word $w$ of length $n$. On the cylinder $[w]$
we have $\omega|n=w$ and 
\[
T_{n}(\omega)=\mathrm{tr}(R_{w}).
\]
Set 
\[
A:=\mathrm{tr}(R_{w}),\qquad s_{j}:=\mathrm{tr}(D_{wj})=\mathrm{tr}\left(R^{1/2}_{w}P_{j}R^{1/2}_{w}\right)\ge0,\qquad S:=\sum^{m}_{j=1}s_{j}.
\]
By construction, 
\[
p^{\mathrm{tr}}(j\mid w)=\begin{cases}
{\displaystyle \frac{s_{j}}{S},} & \text{if }S>0,\\
q_{j}, & \text{if }S=0.
\end{cases}
\]

We consider the two cases separately.

If $S=0$, then $s_{j}=0$ for all $j$. In particular 
\[
0=\sum^{m}_{j=1}\mathrm{tr}(R^{1/2}_{w}P_{j}R^{1/2}_{w})=\mathrm{tr}\left(R^{1/2}_{w}\left(\sum\nolimits^{m}_{j=1}P_{j}\right)R^{1/2}_{w}\right)\ge\alpha\,\mathrm{tr}(R_{w}),
\]
by \eqref{eq:5-1} on $H_{0}$. Since $\alpha>0$, it follows that
$\mathrm{tr}(R_{w})=0$, so $A=0$ and hence $T_{n}(\omega)=0$ on
$[w]$. Because $R_{w}\ge0$ is trace class, $\mathrm{tr}(R_{w})=0$
implies $R_{w}=0$, and therefore 
\[
R_{wj}=R^{1/2}_{w}(I-P_{j})R^{1/2}_{w}=0,\qquad\mathrm{tr}(R_{wj})=0
\]
for all $j$. Thus 
\[
\mathbb{E}_{\nu^{\mathrm{tr}}}\left[T_{n+1}\mid\mathcal{F}_{n}\right](\omega)=\sum^{m}_{j=1}p^{\mathrm{tr}}(j\mid w)\,\mathrm{tr}(R_{wj})=0=c\,T_{n}(\omega)
\]
on $[w]$. In particular, the desired inequality holds with equality
on trace-dead cylinders.

If $S>0$, then on $[w]$ we have 
\[
\mathbb{E}_{\nu^{\mathrm{tr}}}\left[T_{n+1}\mid\mathcal{F}_{n}\right](\omega)=\sum^{m}_{j=1}p^{\mathrm{tr}}(j\mid w)\,\mathrm{tr}(R_{wj}).
\]
As in \prettyref{sec:5}, 
\[
R_{wj}=R^{1/2}_{w}(I-P_{j})R^{1/2}_{w},
\]
so 
\[
\mathrm{tr}(R_{wj})=\mathrm{tr}\left(R^{1/2}_{w}(I-P_{j})R^{1/2}_{w}\right)=\mathrm{tr}(R_{w})-\mathrm{tr}\left(R^{1/2}_{w}P_{j}R^{1/2}_{w}\right)=A-s_{j}.
\]
Therefore 
\[
\frac{\mathbb{E}_{\nu^{\mathrm{tr}}}[T_{n+1}\mid\mathcal{F}_{n}](\omega)}{A}=\sum^{m}_{j=1}\frac{s_{j}}{S}\left(1-\frac{s_{j}}{A}\right)=1-\frac{\sum^{m}_{j=1}s^{2}_{j}}{SA}.
\]

By Cauchy--Schwarz, 
\[
\left(\sum\nolimits^{m}_{j=1}s_{j}\right)^{2}\le m\sum\nolimits^{m}_{j=1}s^{2}_{j},
\]
so 
\[
\sum^{m}_{j=1}s^{2}_{j}\ge\frac{S^{2}}{m}.
\]
Thus 
\[
\frac{\mathbb{E}_{\nu^{\mathrm{tr}}}[T_{n+1}\mid\mathcal{F}_{n}](\omega)}{A}\le1-\frac{S^{2}/m}{SA}=1-\frac{S}{mA}.
\]

On the other hand, applying \eqref{eq:5-1} to $R^{1/2}_{w}$ gives
\[
S=\sum^{m}_{j=1}\mathrm{tr}\left(R^{1/2}_{w}P_{j}R^{1/2}_{w}\right)=\mathrm{tr}\left(R^{1/2}_{w}\left(\sum\nolimits^{m}_{j=1}P_{j}\right)R^{1/2}_{w}\right)\ge\alpha\,\mathrm{tr}(R_{w})=\alpha A.
\]
Consequently, 
\[
\frac{\mathbb{E}_{\nu^{\mathrm{tr}}}[T_{n+1}\mid\mathcal{F}_{n}](\omega)}{A}\le1-\frac{\alpha}{m},
\]
or equivalently, 
\[
\mathbb{E}_{\nu^{\mathrm{tr}}}\left[T_{n+1}\mid\mathcal{F}_{n}\right](\omega)\le\left(1-\frac{\alpha}{m}\right)A=\left(1-\frac{\alpha}{m}\right)T_{n}(\omega).
\]

Thus, we have shown that for every $n$ and every cylinder $[w]$
of length $n$, 
\[
\mathbb{E}_{\nu^{\mathrm{tr}}}\left[T_{n+1}\mid\mathcal{F}_{n}\right]\le c\,T_{n}\quad\text{almost surely,}
\]
with $c=1-\alpha/m\in[0,1)$. This proves that $\{T_{n}\}$ is a bounded
nonnegative supermartingale and yields part (1) upon taking expectations:
\[
\mathbb{E}_{\nu^{\mathrm{tr}}}\left[T_{n+1}\right]\le c\,\mathbb{E}_{\nu^{\mathrm{tr}}}\left[T_{n}\right]\quad\text{for all }n\ge0.
\]

Iterating gives 
\[
\mathbb{E}_{\nu^{\mathrm{tr}}}\left[T_{n}\right]\le c^{n}\mathbb{E}_{\nu^{\mathrm{tr}}}\left[T_{0}\right]=c^{n}\mathrm{tr}(R_{0}),
\]
which is part (2) for the expectations.

Since $\{T_{n}\}$ is a bounded nonnegative supermartingale, the supermartingale
convergence theorem (\prettyref{thm:d-2}) implies that there exists
an integrable random variable $T_{\infty}$ such that 
\[
T_{n}\longrightarrow T_{\infty}\quad\text{almost surely and in }L^{1}(\nu^{\mathrm{tr}}).
\]
On the other hand, for each fixed $\omega$, the sequence $R_{\omega|n}$
is monotone decreasing in the Loewner order and bounded below by $0$,
so it converges strongly to a positive operator $R_{\infty}(\omega)$.
By monotone convergence of the trace, 
\[
T_{n}(\omega)=\mathrm{tr}(R_{\omega|n})\downarrow\mathrm{tr}(R_{\infty}(\omega)),
\]
so 
\[
T_{\infty}(\omega)=\mathrm{tr}(R_{\infty}(\omega))
\]
for $\nu^{\mathrm{tr}}$-almost every $\omega$.

From the bound 
\[
\mathbb{E}_{\nu^{\mathrm{tr}}}\left[T_{n}\right]\le c^{n}\mathrm{tr}(R_{0})\longrightarrow0
\]
and the $L^{1}$-convergence $T_{n}\to T_{\infty}$, we deduce 
\[
\mathbb{E}_{\nu^{\mathrm{tr}}}\left[T_{\infty}\right]=0.
\]
Since $T_{\infty}\ge0$, this forces $T_{\infty}(\omega)=0$ for $\nu^{\mathrm{tr}}$-almost
every $\omega$, that is, 
\[
\mathrm{tr}(R_{\infty}(\omega))=0\quad\text{for }\nu^{\mathrm{tr}}\text{-almost every }\omega.
\]

Finally, each $R_{\infty}(\omega)$ is positive and trace class, so
$\mathrm{tr}(R_{\infty}(\omega))=0$ implies $R_{\infty}(\omega)=0$.
This proves part (3). 
\end{proof}
\begin{rem}
This theorem is a trace analogue of the extinction result for $\langle x,R_{\omega|n}x\rangle$
in \prettyref{thm:5-3}. The difference is that $\nu^{\mathrm{tr}}$
depends only on the WR tree and the trace functional, not on a particular
vector $x$, and the conclusion $R_{\infty}(\omega)=0$ is an operator
statement valid along $\nu^{\mathrm{tr}}$-almost every branch. 
\end{rem}

\subsection{A global random WR Parseval-type frame}

We now combine \prettyref{thm:7-1} with the spectral decomposition
of the dissipated operators $\Delta_{k}(\omega)$ from \prettyref{sec:6}
to obtain an operator-level random Parseval-type frame identity.

Recall that $R_{0}$ assumed to be of trace class, and hence each
$R_{w}$ and $\Delta_{k}(\omega)$ is trace class and compact. For
each $\omega\in\Omega$ and $k\ge1$, the spectral theorem for compact
selfadjoint operators gives an orthonormal family $\{u_{k,r}(\omega)\}_{r\in\mathcal{R}_{k}(\omega)}\subset H$
and eigenvalues $\lambda_{k,r}(\omega)\ge0$ such that 
\[
\Delta_{k}(\omega)=\sum_{r\in\mathcal{R}_{k}(\omega)}\lambda_{k,r}(\omega)\left|u_{k,r}(\omega)\right\rangle \left\langle u_{k,r}(\omega)\right|,
\]
with convergence in the strong operator topology, where $\mathcal{R}_{k}(\omega)$
is at most countable. We then define the atoms 
\[
\varphi_{k,r}(\omega):=\lambda_{k,r}(\omega)^{1/2}u_{k,r}(\omega),
\]
so that 
\[
\Delta_{k}(\omega)=\sum_{r\in\mathcal{R}_{k}(\omega)}\left|\varphi_{k,r}(\omega)\right\rangle \left\langle \varphi_{k,r}(\omega)\right|.
\]

For any $x\in H$, 
\[
\left\langle x,\Delta_{k}(\omega)x\right\rangle =\sum_{r\in\mathcal{R}_{k}(\omega)}\left|\left\langle x,\varphi_{k,r}(\omega)\right\rangle \right|^{2},
\]
with the scalar series converging monotonically in $r$.

We can now state the main result of this section.
\begin{thm}
\label{thm:7-3} Assume \eqref{eq:5-1} and that $R_{0}$ is trace
class. Then there exists a $\nu^{\mathrm{tr}}$-full measure set $\Omega_{0}\subset\Omega$
such that for every $\omega\in\Omega_{0}$ the following hold:
\begin{enumerate}
\item The operator-level telescoping identity 
\[
R_{0}=\sum^{\infty}_{k=1}\Delta_{k}(\omega)
\]
holds in the strong operator topology.
\item For every $x\in H$, 
\[
\left\langle x,R_{0}x\right\rangle =\sum^{\infty}_{k=1}\left\langle x,\Delta_{k}(\omega)x\right\rangle =\sum^{\infty}_{k=1}\sum_{r\in\mathcal{R}_{k}(\omega)}\left|\left\langle x,\varphi_{k,r}(\omega)\right\rangle \right|^{2},
\]
with the double series converging monotonically.
\item The family 
\[
\left\{ \varphi_{k,r}(\omega):k\ge1,\ r\in\mathcal{R}_{k}(\omega)\right\} 
\]
is a frame for $\overline{ran}(R^{1/2}_{0})$ with frame operator
$R_{0}$: for every $x\in H$, 
\[
\sum_{k,r}\left|\left\langle x,\varphi_{k,r}(\omega)\right\rangle \right|^{2}=\left\langle x,R_{0}x\right\rangle ,
\]
and the closed linear span of $\{\varphi_{k,r}(\omega)\}$ is $\overline{ran}(R^{1/2}_{0})$.
In particular, when $R_{0}=I$ on a closed subspace $K\subset H$,
this is a Parseval frame for $K$ in the usual sense.
\end{enumerate}
\end{thm}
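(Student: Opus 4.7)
The plan is to stitch together three ingredients already established: the pathwise telescoping identity of \prettyref{thm:B-1}(3), the trace-biased operator extinction of \prettyref{thm:7-1}(3), and the spectral atomization of each dissipated operator $\Delta_{k}(\omega)$ recalled just above the statement. No new analytic estimate is needed; the argument is essentially bookkeeping on a single full-measure subset of $\Omega$.

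First I would take $\Omega_{0}\subset\Omega$ to be the $\nu^{\mathrm{tr}}$-full measure set on which $R_{\infty}(\omega)=0$, as provided by \prettyref{thm:7-1}(3). For every $\omega\in\Omega_{0}$, the pathwise identity
\[
R_{0}=R_{\infty}(\omega)+\sum_{k=1}^{\infty}\Delta_{k}(\omega)
\]
from \prettyref{thm:B-1}(3), with SOT convergence on the right, collapses to (1). For (2), I would fix $x\in H$ and $\omega\in\Omega_{0}$, apply the quadratic form $\langle x,\cdot\,x\rangle$ to both sides (monotone convergence passes the sum through), and substitute the spectral decomposition $\Delta_{k}(\omega)=\sum_{r}|\varphi_{k,r}(\omega)\rangle\langle\varphi_{k,r}(\omega)|$. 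All terms being nonnegative, Tonelli legitimizes any rearrangement of the resulting double series.

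For (3), the identity in (2), read for every $x\in H$, says that the bounded positive operator $\sum_{k,r}|\varphi_{k,r}(\omega)\rangle\langle\varphi_{k,r}(\omega)|$ (Bessel bound $\|R_{0}\|$ from (2)) has the same quadratic form as $R_{0}$, hence agrees with $R_{0}$ as a bounded self-adjoint operator; this is the frame operator identity. For the closed span I would argue in two directions. Since $\Delta_{k}(\omega)\le R_{\omega|k-1}\le R_{0}$ in the Loewner order, the Douglas range inclusion theorem \cite{MR203464} yields $\mathrm{ran}(\Delta_{k}(\omega)^{1/2})\subseteq\mathrm{ran}(R_{0}^{1/2})$, so every atom $\varphi_{k,r}(\omega)$ lies in $K_{0}:=\overline{\mathrm{ran}}(R_{0}^{1/2})$; conversely, if $y\in K_{0}$ is orthogonal to every atom, the frame identity forces $\|R_{0}^{1/2}y\|^{2}=\langle y,R_{0}y\rangle=0$, so $y\perp\mathrm{ran}(R_{0}^{1/2})$ and therefore $y=0$. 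Thus $\overline{\mathrm{span}}\{\varphi_{k,r}(\omega)\}=K_{0}$. In the specialization $R_{0}=I$ on $K$ (with $R_{0}=0$ on $K^{\perp}$), $K_{0}=K$ and the frame identity reduces to the Parseval relation $\sum_{k,r}|\langle x,\varphi_{k,r}(\omega)\rangle|^{2}=\|x\|^{2}$ for $x\in K$.

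The only step requiring real care, and hence the main potential obstacle, is the span argument in (3): the frame identity by itself only controls the frame operator, and the conclusion that the atoms span $K_{0}$ densely depends on the injectivity of $R_{0}^{1/2}$ on $K_{0}$. Measurability and null-set management should not be an obstacle, since the single $\nu^{\mathrm{tr}}$-full set $\Omega_{0}$ controlling extinction automatically supports both the SOT telescoping and the spectral atomization, each of which is valid for \emph{every} $\omega$ by \prettyref{thm:B-1} and the compactness of $\Delta_{k}(\omega)$.
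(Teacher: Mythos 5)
Your proposal is correct and follows essentially the same route as the paper: take $\Omega_{0}$ from \prettyref{thm:7-1}, collapse the pathwise telescoping identity of \prettyref{thm:B-1} using $R_{\infty}(\omega)=0$, expand each $\Delta_{k}(\omega)$ spectrally, and prove the span statement by combining the range inclusion $\varphi_{k,r}(\omega)\in ran(R^{1/2}_{0})$ with the observation that orthogonality to all atoms forces $\langle y,R_{0}y\rangle=0$. The only cosmetic difference is that you cite Douglas's theorem explicitly for the atom inclusion and record the polarization step identifying the frame operator with $R_{0}$, both of which the paper leaves implicit.
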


\begin{proof}
By \prettyref{thm:7-1}, there exists a $\nu^{\mathrm{tr}}$-full
measure set $\Omega_{0}\subset\Omega$ such that $R_{\infty}(\omega)=0$
for every $\omega\in\Omega_{0}$.

For each $\omega\in\Omega$ and $n\ge1$, the pathwise telescoping
identity gives 
\[
R_{\omega|n}=R_{0}-\sum^{n}_{k=1}\Delta_{k}(\omega),
\]
with convergence in the strong operator topology as $n\to\infty$,
and $R_{\omega|n}\downarrow R_{\infty}(\omega)$ in the Loewner order.
Thus for each $\omega$, 
\[
R_{0}-R_{\infty}(\omega)=\sum^{\infty}_{k=1}\Delta_{k}(\omega)
\]
in the strong operator topology, with the series converging monotonically
in the Loewner order.

If $\omega\in\Omega_{0}$, then $R_{\infty}(\omega)=0$, so 
\[
R_{0}=\sum^{\infty}_{k=1}\Delta_{k}(\omega)
\]
in the strong operator topology. This proves part (1).

Fix $\omega\in\Omega_{0}$ and $x\in H$. Applying $\left\langle x,\cdot x\right\rangle $
to the identity in (1) yields 
\[
\left\langle x,R_{0}x\right\rangle =\sum^{\infty}_{k=1}\left\langle x,\Delta_{k}(\omega)x\right\rangle ,
\]
with monotone convergence in $k$, since the partial sums are increasing
and bounded above by $\left\langle x,R_{0}x\right\rangle $.

For each $k$, 
\[
\left\langle x,\Delta_{k}(\omega)x\right\rangle =\sum_{r\in\mathcal{R}_{k}(\omega)}\left|\left\langle x,\varphi_{k,r}(\omega)\right\rangle \right|^{2},
\]
with monotone convergence in $r$. Combining these relations gives
\[
\left\langle x,R_{0}x\right\rangle =\sum^{\infty}_{k=1}\sum_{r\in\mathcal{R}_{k}(\omega)}\left|\left\langle x,\varphi_{k,r}(\omega)\right\rangle \right|^{2}.
\]
Since all terms in the double series are nonnegative, the convergence
is monotone and the order of summation may be rearranged arbitrarily:
\[
\sum_{k,r}\left|\left\langle x,\varphi_{k,r}(\omega)\right\rangle \right|^{2}=\left\langle x,R_{0}x\right\rangle .
\]
This proves part (2).

For part (3), first note that each $\varphi_{k,r}(\omega)$ lies in
$ran(R^{1/2}_{\omega|k-1})\subseteq ran(R^{1/2}_{0})$, so 
\[
V(\omega):=\overline{span}\left\{ \varphi_{k,r}(\omega):k\ge1,\ r\in\mathcal{R}_{k}(\omega)\right\} \subseteq\overline{ran}(R^{1/2}_{0}).
\]

Conversely, suppose $y\in H$ satisfies $y\perp V(\omega)$. Then
$\left\langle y,\varphi_{k,r}(\omega)\right\rangle =0$ for all $(k,r)$,
and by part (2) we obtain 
\[
\sum_{k,r}\left|\left\langle y,\varphi_{k,r}(\omega)\right\rangle \right|^{2}=\left\langle y,R_{0}y\right\rangle .
\]
The left-hand side vanishes, so $\left\langle y,R_{0}y\right\rangle =0$.
Since $R_{0}\ge0$, this implies $R^{1/2}_{0}y=0$, that is, $y\in\ker R_{0}$.

Thus we have shown that 
\[
V(\omega)^{\perp}\subseteq\ker R_{0}.
\]
Taking orthogonal complements yields 
\[
(\ker R_{0})^{\perp}\subseteq V(\omega).
\]
But $(\ker R_{0})^{\perp}=\overline{ran}(R^{1/2}_{0})$. Together
with the inclusion $V(\omega)\subseteq\overline{ran}(R^{1/2}_{0})$
obtained above, this implies 
\[
V(\omega)=\overline{ran}(R^{1/2}_{0}).
\]

Combining this with the identity in part (2), we see that $\{\varphi_{k,r}(\omega)\}$
is a frame for $\overline{ran}(R^{1/2}_{0})$ with frame operator
$R_{0}$: for every $x\in H$, 
\[
\sum_{k,r}\left|\left\langle x,\varphi_{k,r}(\omega)\right\rangle \right|^{2}=\left\langle x,R_{0}x\right\rangle .
\]
In particular, when $R_{0}$ is the identity operator on a closed
subspace $K\subset H$ (and zero on $K^{\perp}$), this identify reduces
to 
\[
\sum_{k,r}\left|\left\langle x,\varphi_{k,r}(\omega)\right\rangle \right|^{2}=\left\langle x,x\right\rangle ,\quad x\in K,
\]
so $\{\varphi_{k,r}(\omega)\}$ is a Parseval frame for $K$.
\end{proof}
\bibliographystyle{amsalpha}
\bibliography{ref}

\end{document}